\newtheorem{thm}{Theorem}[section]
\newtheorem{lem}[thm]{Lemma}
\newtheorem{cor}[thm]{Corollary}
\theoremstyle{definition}
\newtheorem{exam}[thm]{Example}
\newtheorem{defn}[thm]{Definition}
\newtheorem{problem}[thm]{Problem}
\newtheorem{remark}[thm]{Remark}
\newcommand{\Br}{\mathrm{Br}_{n}^{+}}
\newcommand{\X}{X_{u,{\bm{\beta}}}}
\newcommand{\C}{\mathbb{C}}
\newcommand{\CC}{\mathbb{C}}
\newcommand{\Z}{\mathbb{Z}}
\newcommand{\bhat}{\widehat{B^{u,\bm{\beta}}}}
\newcommand{\ahat}{{A^{u,\bm{\beta}}}}
\newcommand{\til}{\widetilde{B}^{u,\bm\beta}}
\newcommand{\Hmatrix}{H^{u,\bm{\beta}}}
\newcommand{\D}{D^{u,\bm{\beta}}}
\newcommand{\p}{\partial}
\newcommand{\newword}[1]{\emph{\textbf{#1}}}
\newtheorem{mainthm}{Theorem}
\definecolor{cinnamon}{rgb}{0.82, 0.41, 0.12}
\definecolor{babyblue}{rgb}{0.54, 0.81, 0.94}
\definecolor{lightskyblue}{rgb}{0.53, 0.81, 0.98}
\definecolor{silver}{rgb}{0.2, 0.2, 0.2}
\definecolor{almond}{rgb}{0.94, 0.87, 0.8}
\definecolor{azure}{rgb}{0.94, 1.0, 1.0}
\definecolor{piggypink}{rgb}{0.99, 0.93, 0.96}
\definecolor{frenchrose}{rgb}{0.96, 0.29, 0.54}
\title{Cluster automorphism group of braid varieties}
\author{Soyeon Kim}
\address{Department of Mathematics\\ University of California at Davis\\ One Shields Avenue, Davis CA 95616}
\email{syxkim@ucdavis.edu}
\date{}
\begin{document}

\begin{abstract}
The cluster automorphism group of a cluster variety was defined by Gekhtman--Shapiro--Vainshtein \cite{GSV}, and later studied by Lam--Speyer \cite{LS22}. Braid varieties are interesting affine algebraic varieties indexed by positive braid words. It was proved recently that braid varieties are cluster varieties.
%admit a cluster structure.
In this paper, we propose an explicit description of the cluster automorphism group and its action on braid varieties, and provide several computed examples.
\end{abstract}

\maketitle

\section{Introduction}

A cluster algebra $\mathcal{A}$, introduced by Fomin--Zelevinsky \cite{FZ02}, is a commutative algebra with a set of distinguished generators called \textit{cluster variables}. Given an \textit{initial seed} $(\textbf{x},\tilde{B})$ where $\mathbf{x}$ refers to a tuple of $m+f$ number of \textit{initial cluster variables} and $\tilde{B}$ refers to an \textit{exchange matrix}, one uses the iterative combinatorial procedure called \textit{mutation} to produce the rest of the cluster variables, see Section \ref{sec: cluster alg} for more precise definitions.
%By the recent work of \cite{CGGLSS22,GLSB23,GLSBS22}, it is known that braid varieties are cluster varieties.
%whose maximal algebraically independent set of generators is encoded in a directed graph called the \textit{quiver}. %A quiver is a finite directed graph satisfying certain conditions. 
The \textit{cluster automorphism group} of $\mathcal{A}$, denoted by $\text{Aut}(\mathcal{A})$, is the collection of algebra automorphisms that scales the cluster variables of $\mathcal{A}$, see Definition \ref{def: cluster auto}. We note that although cluster automorphism group and cluster modular group both consider automorphisms on $\mathcal{A}$, our definition of the cluster automorphism group is different from that of the cluster modular group \cite{hughes}. Cluster automorphism group was first studied by Gekhtman--Shapiro--Vainshtein in \cite{GSV} and later by Lam--Speyer in \cite{LS22}. Recently, understanding the action of cluster automorphism group became significant part in the study of \textit{cluster deep loci} and \textit{no mysterious point conjecture} in \cite{clusterloci}.

In this paper, we investigate the cluster automorphism group of braid varieties. Braid varieties are special class of affine algebraic varieties indexed by a pair of permutation $u\in S_n$ and a positive braid representative $\bm\beta=\sigma_{i_1}\cdots \sigma_{i_k}$, see Definition \ref{defn: braid var}. It is defined as a space of configurations of flags \[
  X_{u,\bm\beta}=\left\{(F_{1},F_{2},\ldots, F_{k})\in (G/B_{+})^{k}: B_{+}\xrightarrow{s_{i_1}} F_{1}\xrightarrow{s_{i_2}} F_{2}\to \ldots \xrightarrow{s_{i_k}}F_{k}\xleftarrow{w_0u} B_{-}\right\},
\]
where $G=GL(n,\mathbb{C})$ and $B_{+}$ (resp. $B_{-}$) denotes the Borel subgroup (resp. opposite Borel subgroup) of $G$. It generalizes many well-known Lie theoretic varieties, such as \textit{Grassmannian}, \textit{positroid varieties}, \textit{open Richardson varieties} and \textit{double Bott--Samelson varieties}, see \cite{CGGS,GL19, GY06b,scott,SSBW19}. In recent years, braid varieties have been a topic of interest due to their wide range of connection to \textit{Khovanov--Rozansky link homology} \cite{GL24kl} and \textit{exact Lagrangian fillings and Legendrian links} \cite{CGGLSS22}. %By the recent works \cite{CGGLSS22,GLSB23,GLSBS22}, it is proven that the coordinate ring of a braid variety admits a cluster structure. 
The recent works of \cite{CGGLSS22} or \cite{GLSB23,GLSBS22} provide separate cluster structures in the coordinate ring of a braid variety.
These cluster structures are explained with combinatorial models. For example, \cite{GLSBS22} used a combinatorial model called the \textit{3D plabic graph} to explain a cluster structure given in the coordinate ring of a braid variety. In this paper, we mainly follow the cluster structure construction in \cite{GLSBS22}. From now on, we use 
$\widetilde{B}^{u,\bm\beta}$ to refer to the $\tilde{B}$ from an initial seed associated to $X_{u,\bf\beta}$ in \cite{GLSBS22}.

Matrix $\til$ exhibits many nice properties. For example, it is known that it is possible to extend $\widetilde{B}^{u,\bm\beta}$ to an invertible matrix, see Remark \ref{rmk: really full rank}. Once we have an invertible matrix extension of $\widetilde{B}^{u,\bm\beta}$, then \cite[Proposition 5.1]{LS22} implies that the action of $\text{Aut}(\mathcal{A})$ is described via the basis of the inverse of an extension matrix. Thus, in order to understand the action of a cluster automorphism group of braid varieties, we need to explicitly understand an invertible extension of $\widetilde{B}^{u,\beta}$ and its inverse. 

Now we are able to state our main result. Let us assume that $\widetilde{B}^{u,\beta}$ is a rectangular matrix of size ${m\times (m+f)}$ where $m$ and $f$ denotes the number of \textit{mutables} and \textit{frozen vertices} in the given $\mathbf{x}$ respectively. We define the {\bf square} $(m+f)\times (m+f)$ matrix $\bhat$ using the 3D plabic graph $G_{u,\bf \beta}$.
%We now give an explicit combinatorial formula {\bf square} $(m+f)\times (m+f)$ matrix $\bhat$, denoted by as follows. 
%We extend $\widetilde{B}(Q_{u,\bm\beta})$ matrix  to the {\bf square} $(m+f)\times (m+f)$ matrix $\bhat$, defined in Definition \ref{defn: bhat}.

%Our main result is the following. 
%We then prove the following:
\begin{mainthm}
The matrix $\bhat$ is an integer matrix with
$\det \bhat=(-1)^{m+f}$. Thus, its inverse $\ahat$ is an integer matrix.
%As a consequence, the inverse matrix $(\bhat)^{-1}$ has all integer entries.
\end{mainthm}

%As a consequence of Theorem \ref{thm: main thm in intro}, the matrix $\bhat$ is invertible, and has all integer entries. For convenience, we denote $\ahat$ to be its inverse.
%As an application, we use the matrix $\ahat$ to describe the cluster automorphism group of a braid variety. 

%Applying Proposition 5.1 of \cite{LS22} along with the fact that $Q_{u,\beta}$ has \textit{really full rank}, we know that $\text{Aut}(\mathcal{A}(Q_{u,\bm\beta}))$ is an algebraic torus of dimension $f$
%and the action of $\text{Aut}(\mathcal{A}(Q_{u,\bm\beta}))$ is described via the basis of $\ker{\widetilde{B}(Q_{u,\bm\beta})}$.
As an application, we study the cluster automorphism group of braid varieties and its action using $\ahat$.
In Lemma \ref{lem: b tilde and ahat}, we show that the matrix $\til$ is the free abelian group of rank $f$ generated by vectors $({col}_j(\ahat))_{m+1\le j\le m+f}$ where ${col}_j(\ahat)$ denotes the $j$th column vector of $\ahat$.
%Following from the work of Lam--Speyer \cite[Section 5.1]{LS22}, in the braid variety setting, it is already known that $\text{Aut}(\mathcal{A})$ is an algebraic torus $({\C}^{\times})^{f}$. They showed even more that the action of cluster automorphism group on a cluster variety is described via the basis of $\ker{\widetilde{B}}$. 
Combined with Theorem \ref{thm: lam-speyer},
therefore, the action of the cluster automorphism group is explicitly described
%, using the inductive factorization of $\ahat$ matrix 
as follows.
%Once we obtain an explicit description for this cluster automorphism group, we hope to seek its usage on investigating \textit{cluster deep locus} and \textit{mysterious points} in braid varieties, which was studied in \cite{clusterloci,Muller}.   

%As a combination of Corollary \ref{cor: columns of inverse=kernel} and the result of Lam and Speyer \cite{LS22}, we obtain the explicit description for the action of cluster automorphism group on braid variety $\X$ as follows:
\begin{cor}\label{cor: main cor}
For $1\le j\le f$,
    suppose that ${col}_{m+j}(\ahat)=(a_{1,m+j},\ldots,a_{m+f,m+j})$.
    %, \quad j=1,\ldots,f$ are the last $f$ column vectors of the inverse matrix $\ahat$. 
    Then $\mathrm{Aut}(\mathcal{A}(Q_{u,\bm\beta}))$ is an algebraic torus with coordinates $t_1,\ldots,t_f$ which acts on $\X$ by 
    \[
x_i\to \prod t_j^{a_{i,m+j}}\cdot x_i,
\]
where $\textbf{x}=(x_1,\ldots, x_{m+f})$ be an initial seed of $\X$.
\end{cor}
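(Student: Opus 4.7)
The proof is essentially an assembly of the ingredients already laid out in the excerpt, so the plan is to show how they slot together. The central input is Proposition 5.1 of \cite{LS22}: since $Q_{u,\bm\beta}$ has really full rank, $\mathrm{Aut}(\mathcal{A}(Q_{u,\bm\beta}))$ is an algebraic torus of rank equal to the corank of $\widetilde{B}(Q_{u,\bm\beta})$, namely $f$, and the action on the initial cluster is given as follows: for every integer vector $(a_1,\ldots,a_{m+f})$ in $\ker \widetilde{B}(Q_{u,\bm\beta})$, there is a one-parameter subgroup $t \mapsto (x_i \mapsto t^{a_i} x_i)$. Choosing a basis of the kernel thus produces an explicit presentation of the torus as $(\mathbb{C}^\times)^f$ together with the formula for its action.

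The next step is to pick that basis. By Lemma \ref{lem: b tilde and ahat}, $\ker \widetilde{B}(Q_{u,\bm\beta})$ is a free abelian group of rank $f$, with a distinguished $\mathbb{Z}$-basis given by the last $f$ columns $\mathrm{col}_{m+1}(\ahat),\ldots,\mathrm{col}_{m+f}(\ahat)$ of the inverse matrix $\ahat$. Integrality of these columns is guaranteed by Theorem \ref{thm: main thm in intro}, which ensures $\bhat \in \mathrm{GL}_{m+f}(\mathbb{Z})$, so in particular $\ahat$ has integer entries and its columns genuinely live in $\mathbb{Z}^{m+f}$.

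Now I would introduce the torus coordinates. For each $j \in \{m+1,\ldots,m+f\}$, let $t_{j-m}$ be the parameter of the one-parameter subgroup associated with $\mathrm{col}_j(\ahat)$, so that this subgroup acts by $x_i \mapsto t_{j-m}^{a_{i,m+j}} x_i$. Because the columns form a $\mathbb{Z}$-basis, the resulting homomorphism $(\mathbb{C}^\times)^f \to \mathrm{Aut}(\mathcal{A}(Q_{u,\bm\beta}))$ is an isomorphism of algebraic tori, and the combined action is obtained by multiplying the one-parameter actions, producing exactly the formula
\[
x_i \;\longmapsto\; \Bigl(\prod_{j=1}^{f} t_j^{\,a_{i,m+j}}\Bigr)\, x_i.
\]

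There is no real obstacle in this proof beyond a small bookkeeping step: one must verify that the reindexing between the abstract torus parameters $t_1,\ldots,t_f$ and the last $f$ columns of $\ahat$ is the one stated, and that Lam--Speyer's conventions for the $\widetilde{B}$-matrix (in particular, which side of $\widetilde{B}$ is acted on) agree with the conventions in Section \ref{sec: cluster alg}. Once those conventions are pinned down, the corollary is immediate from Theorem \ref{thm: main thm in intro}, Lemma \ref{lem: b tilde and ahat} and \cite[Proposition 5.1]{LS22}.
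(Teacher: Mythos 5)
Your proposal is correct and follows essentially the same route as the paper: both invoke \cite[Proposition 5.1]{LS22} together with the really-full-rank property (Remark \ref{rmk: really full rank}) to identify $\mathrm{Aut}(\mathcal{A}(Q_{u,\bm\beta}))$ with the torus attached to $\ker\widetilde{B}(Q_{u,\bm\beta})$, and then substitute the basis of that kernel supplied by Lemma \ref{lem: b tilde and ahat} (the last $f$ columns of $\ahat$, integral by Theorem \ref{thm: main thm}) to get the stated scaling action. Your extra remark that the columns form a $\mathbb{Z}$-basis, so the map $(\mathbb{C}^\times)^f\to\mathrm{Aut}(\mathcal{A}(Q_{u,\bm\beta}))$ is an isomorphism rather than merely surjective, is a point the paper leaves implicit but is the same argument.
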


In Section \ref{sec: ex of last sec}, we explore many examples and conputations of $\ahat$ and apply Corollary \ref{cor: main cor}.
%, including the cases where the rank of the cluster automorphism group is larger than $n-1$.
In many cases, we observe a curious phenomenon that all entries of $\ahat$ are non-positive. We call this the \textbf{sign phenomenon}. We list some of the supporting examples and counterexample of this phenomenon in Section \ref{sec: ex of last sec}. It would be interesting to understand the geometric meaning of this phenomenon. In line with this inquiry, we propose the following problem.

\begin{problem}
Give an explicit combinatorial description for the non-zero entries of $\ahat$.
\end{problem}

Given a positive $n$-strand braid word $\beta$, we have an explicit description of the action of $(\C^{\times})^{n-1}$ on braid varieties, see \cite{CGGS} for more details. This action is given by cluster automorphisms, thus we have a homomorphism $(\C^{\times})^{n-1}\to \text{Aut}(\mathcal{A}(Q_{u,\bm\beta}))$. This homomorphism is surjective for $\X$ when $u=id$, but not in general. In examples in Section \ref{sec: ex of last sec}, we see that $f$ could be larger than $n-1$, so the cluster automorphism group has higher dimension than $(\C^{\times})^{n-1}$ in general. This leads to the following question.

\begin{problem}
Describe the action of the cluster automorphism group $\text{Aut}(\mathcal{A}(Q_{u,\bm\beta}))\cong(\C^{\times})^f$ on the flags of the braid variety.
\end{problem}

\subsection{Relationship to other related works}

The construction of our matrix $\bhat$ is indeed motivated by another invertible extension of $\til$, which was defined in \cite[Section 8.2]{CGGLSS22}. The forthcoming works \cite{comparisonpaper,CKW} will imply that our matrix is essentially same to this another extension for the correct choice of the weave. 

However, the key new things are the followings. First, our matrix construction is adapted and translated to the language of 3D plabic graphs. Our matrix has advantage in computation in practice. For example, one can use Galashin's program \cite{Galashin} when computing lots of examples, see Section \ref{sec: ex of last sec}. Also, we give an \textbf{explicit inductive factorization} of the inverse of $\bhat$, see Lemma \ref{lem: inductive factorization for inverse}. Therefore, combined with Corollary \ref{cor: main cor}, this leads to an inductive description of the cluster automorphism group action on braid varieties.

%For convenience, we denote $\ahat$ to be its inverse. This $\ahat$ matrix applies to the study of cluster automorphism group of braid varieties, which will be our main application. 

\section*{Acknowledgments}
The author would like to thank Eugene Gorsky, Melissa Sherman-Bennett, David Speyer and  Jos\'e Simental and Amanda Schwartz for the useful discussions and helpful comments. The author is also deeply in debt to Pavel Galashin for his code in double braid varieties \cite{Galashin}, most braid diagrams and quivers are similar to his code. 
The author is partially supported by the NSF grant DMS-2302305 and DMS-2103582.
\section{Background}

\subsection{Braid varieties}
%Throughout this paper, we only focus on type $A$.
%braid group, braid varieties, braid variety cluster structures (by two papers)
We fix some notations. Let $I=[n-1]$ where $n\in \mathbb{N}$. The symmetric group $S_{n}$ is generated by simple transpositions $s_{i}=(i\ i+1)$ for $1\le i\le n-1$. 
We denote by $w_0$ the longest element in $S_n,$ $w_0(i)=n+1-i$. 
\begin{defn}
    The braid group $\mathrm{Br}_{n}$ on $n$ strands is generated by Artin   generators $\sigma_i$ and braid relations 
\[
 \sigma_{i}\sigma_{i+1}\sigma_{i}=\sigma_{i+1}\sigma_{i}\sigma_{i+1}, \quad \sigma_{i}\sigma_{j}=\sigma_{j}\sigma_{i}.
\]
An element $\beta\in \mathrm{Br}_n$ is called a positive braid if it is a product of Artin generators, allowing no inverses. Denote $\Br$ to be the collection of all positive braids in $\mathrm{Br}_n$. 
\end{defn}

Consider specific representative ${\beta}=\sigma_{i_{1}}\sigma_{i_{2}}\cdots \sigma_{i_{k}}$ of positive braid $\beta$, abbreviated by $k$-tuple $\bm{\beta}=(i_1,i_2,\ldots, i_k)\in I^k$. 
%Let $G$ be a general linear group and $B_{+}$ (resp. $B_{-}$) denotes the Borel subgroup (resp. opposite Borel subgroup) of $G$. The elements in $G/B_{+}$ is a \newword{flag}. Two flags are in relative 
%Then \newword{braid variety} is defined as a configuration of flags 
%Let us say that $i_l\in I$ of $\bf{\beta}$ is a \newword{braid letter} of a braid word for $1\le l\le k$. N
%\textcolor{red}{define braid variety using flags, relative position.}
Now let $G$ be a complex general linear group and $B_{+}$ (resp. $B_{-}$) be the group of upper (resp. lower) triangular matrices in $GL(n,\mathbb{C})$. An element $F_i\in G/B_{+}$ is called a flag. We say that two flags $F,F'\in G/B_{+}$ are in position $w\in S_n$ if there exists a matrix $g\in G$ such that $(gF,gF')=(B_{+},wB_{+})$. 

\begin{defn}\cite{CGGS, CGGLSS22, GLSBS22}\label{defn: braid var}
    With these definitions in mind, 
%We denote $s=r+\binom{n}{2}-\ell(u)=\ell(\beta)+\ell(u^{-1}w_0)$. 
%we are ready to define 
a braid variety associated to $u\in S_n$ and $\bm\beta\in I^k$ is defined as 

\[
  X_{u,\bm\beta}=\left\{(F_{1},F_{2},\ldots, F_{k})\in (G/B_{+})^{k}: B_{+}\xrightarrow{s_{i_1}} F_{1}\xrightarrow{s_{i_2}} F_{2}\to \ldots \xrightarrow{s_{i_k}}F_{k}\xleftarrow{w_0u} B_{-}\right\}.
\]
%where $G=GL(n,\mathbb{C})$ and $B_{+}$ (resp. $B_{-}$) denotes the Borel subgroup (resp. opposite Borel subgroup) of $G$.
\end{defn}

\begin{remark}
The braid variety $\X$ is invariant under braid relations, up to some change of variables.
If it is nonempty, it is a smooth complex manifold of dimension
$\ell(\beta)+\ell(u^{-1}w_0)-\binom{n}{2}$. 

\end{remark}

 \begin{remark}
     Note that if $\bm\beta$ is a reduced word, then the corresponding braid variety $\X$ is an open Richardson variety.
 \end{remark}

\subsection{Cluster algebra}\label{sec: cluster alg}

We start by recalling the definition of a quiver, following \cite{LS22, lauren}. A quiver $Q$ is a directed graph with a finite vertex set, where we allow multiple arrows between vertices but do not allow directed cycles of length one or two. We declare that $Q$ is an ice quiver, meaning that the vertices $1,2,\ldots, m$ of $Q$ are designated as \textit{mutable} whereas the remaining vertices $m+1,\ldots, m+f$ in $Q$ are designated as \textit{frozen}. Now we fix $\mathcal{F}$ to be an \textit{ambient field} of rational functions in $m$ independent variables over $\mathbb{C}(x_{m+1},\ldots, x_{m+f})$. Then \textit{labeled seed} in $\mathcal{F}$ is defined to be a pair $(\textbf{x}, Q)$, where $\textbf{x}=(x_1,\ldots, x_{m+f})$ forms a free generating set for $\mathcal{F}$ and $Q$ is an ice quiver as before. Equivalently, labeled seed $(\textbf{x}, Q)$ is identified with a pair $(\textbf{x},\widetilde{B})$ where $\widetilde{B}=\widetilde{B}(Q)$ from Definition \ref{def: B tilde defn}.
%whose vertices $1,\ldots, m$ are mutable, and whose vertices $m+1,\ldots, m+f$ are frozen. 

\begin{defn}\cite{LS22}\label{def: B tilde defn}
Let us fix $Q$ as above. Then an \textit{extended exchange matrix} associated to $Q$ is defined by \begin{equation}\label{eq: b tilde}
    \widetilde{B}(Q)=\left(b_{i,j}\right)_{1\le i\le m+f, 1\le j \le m},
\end{equation}
where $$
b_{i,j}=\begin{cases}
			r, & \text{if there are $r$ arrows from $i$ to $j$}\\
            -r, & \text{if there are $r$ arrows from $j$ to $i$}.
		 \end{cases}
$$
%For simplicity, we often abbreviate $\tilde{B}(Q)$ by $\tilde{B}$.
\end{defn}

\begin{remark}
    Note that this matrix is transposed to what is usually considered in the literature, for example, as in \cite{LS22}.
\end{remark}

\begin{remark}
    Later in Definition \ref{def: half arrow quiver cit}, we construct a quiver $Q_{u,\bf\beta}$. As mentioned in the introduction, we use $\widetilde{B}^{u,\bf\beta}$ to denote $\widetilde{B}(Q_{u,\bf\beta})$ for simplicity.
\end{remark}
%A \textit{mutation} $\mu_k$ is a local operation that transforms an initial seed $s=(\mathbf{x},Q)$ into a closely related seed $s'=(\mathbf{x'}, \mu_k(Q))$. Here, a tuple $\mathbf{x}'=(x_1,\ldots, x_k^{'},\ldots, x_{n+m})$ 

\begin{defn}[\cite{FZ02}]
Let $k$ be a mutable vertex in a quiver $Q$. Then quiver mutation $\mu_k$ transforms $Q$ into a new quiver $\mu_k(Q)$ via the following steps: \begin{enumerate}
        \item For every directed pairs $i\to k\to j$, introduce new arrow $i\to j$.
        \item Reverse all arrows adjacent to a vertex $k$.
        \item Remove any maximal collection of disjoint $2$-cycles.
    \end{enumerate}
\end{defn}

%Let us fix $\mathcal{F}$ to be a localization of a field of rational functions in the variables $x_1,\ldots, x_{m+f}$, i.e. $\mathcal{F}=\mathbb{C}(x_1,x_2,\ldots,x_m)[x_{m+1}^{\pm},\ldots, x_{m+f}^{\pm}]$. Now to each vertex of $Q$, we label them with algebraically independent variables in $\mathcal{F}$ called \textit{cluster variables}.

%The resulting labeled quiver is called a \textit{seed} $\textbf{s}=(\textbf{x},Q)$.\

\begin{defn}\cite{FZ02}
Let $(\textbf{x},Q)$ be a labeled seed in $\mathcal{F}$ as before, and pick $k\in \{1,2,\ldots, m\}$. The \textit{seed mutation} $\mu_k$ is a local operation that transforms an labeled seed $(\textbf{x},Q)$ into another labeled seed $(\textbf{x}', \mu_k(Q)),$ where $\textbf{x}'=(x_1',\ldots, x_m')$ is defined as follows:
    \begin{itemize}
        \item $x_j'=x_j$ for all $j\neq k$.
        \item $x_k'\in \mathcal{F}$ is determined by the \textit{exchange relation} \[
        x_k'\cdot x_k=\underset{\text{$k\to i$ in $Q$}}{\Pi}x_{i}+\underset{\text{$j\to k$ in $Q$}}{\Pi}x_{j}.
        \]
        %Here, $\alpha$ and $\alpha'$ are arrows in $Q$ where $s(\alpha)$ denotes the source of $\alpha$ and $t(\alpha)$ denotes a target of $\alpha$.
    %We use the convention that empty product is equal to $1$. In other words, if we have no vertices going inward or outward, we count them as $1$.

    \end{itemize}
\end{defn}

 To obtain a cluster algebra from an initial labeled seed $(\textbf{x},Q)$, we need to apply mutations to every mutable vertices repeatedly to obtain all possible cluster variables. 
\begin{defn}(\cite{FZ02})
The cluster algebra $\mathcal{A}(Q)$ associated to $Q$ is the subalgebra of the ring $\C(\textbf{x})$
generated as a ring by cluster variables together with $\{x_i^{-1}\}_{m+1\le i\le m+f}$ where the set of (generally infinitely many) cluster variables is constructed from the data $(\textbf{x}, Q)$ using seed mutation above.

%We fix $R=\mathbb{C}[x_{n+1}^{\pm},\ldots, x_{n+m}^{\pm}]$. Then the cluster algebra $\mathcal{A}(Q)$ associated to a seed $\textbf{s}=(\textbf{x},Q)$ is the $R$-algebra inside the ambient field $\mathcal{F}$ generated by all cluster variables from $\textbf{s}$.
\end{defn}

 %Then \textit{cluster algebra} $\mathcal{A}(\Sigma)$ is defined to be the algebra generated by all cluster variables in these all possible seeds. In other words, generators and relations of cluster algebra are encoded in an initial ice quiver $Q$. If $\Sigma\sim \Sigma'$ then $\mathcal{A}(\Sigma)$ and $\mathcal{A}(\Sigma')$ are isomorphic. For more details on cluster algebras, see \cite{FZ02}. 

Separate groups of authors (see \cite{CGGLSS22} and \cite{GLSBS22}) showed that the coordinate ring of a braid variety admits a cluster structure. Two distinct combinatorial models called the Demazure weaves and 3D plabic graphs provide initial seeds on braid varieties. In this paper, we focus on seeds from 3D plabic graphs, which we now recall.
%As we will see from Definition \ref{def: half arrow quiver cit}, we denote the quiver from 3D plabic graph $G_{u,\bm{\beta}}$ by $Q_{u,\bm{\beta}}$.

%Throughout this paper, we mainly focus on 3D plabic graph $G_{u,\bm{\beta}}$ and the quiver $Q_{u,\bm{\beta}}$ and the cluster algebra $\mathcal{A}(Q_{u,\bm\beta})$ derived from it, which was introduced in \cite{GLSBS22}. %However, we will modify the definitions so that it is accessible to our argument. 

%\textcolor{red}{I think I also need to define cluster variables as well? (yes) So I need to define cluster variables, cluster algebra.}
%\textcolor{blue}{Assume that I used $Q_{u,\beta}$ for a quiver notation, and $\mathcal{A}(Q_{u,\beta})$ for a cluster algebra.}

\subsection{3D plabic graph}\label{sec: 3d}

%Now suppose that $Q$ be an ice quiver, with $|Q_0| = m+f$, in such a way that there are $m$ mutable vertices and $f$ frozen on

We recall the construction of a 3D plabic graph, following \cite{GLSBS22}. Note that the original terminology of 3D plabic graph refers to a graph drawn in $\mathbb{R}^3$, though in this paper, 3D plabic graph refers to its ``red projection" on $\mathbb{R}^2$; see \cite[Definition 3.4]{GLSBS22}. 

\begin{defn}\label{demazure quotient}
    Suppose that $v\in S_n$ is a permutation. Then the \textit{Demazure quotient} associated to $v$ and $i_l$ is defined by \[
v\lhd s_{i_l}:=\begin{cases}
    vs_{i_l} &  \text{if $\ell(vs_{i_l})<\ell(v)$},\\
    v & \text{otherwise},
\end{cases}
\]
where $\ell(v)$ denotes the length of a reduced word of $v$. 
\end{defn}

\begin{defn}\label{defn: pds}
Let $\bm\beta=(i_1,\ldots, i_k)\in I^k$ be a braid word representative of $\beta$ and $u$ be a permutation. We define 
the sequence of permutations $u_j$ recursively by setting $u_k:=u$ and
\[
u_{j-1}:=u_{j}\lhd s_{i_j},
\]
where $1\le j\le k$. With this sequence of permutations $u_j$ in mind, let us define the set $J_{u,\bm\beta}:=\{l\in [k]: u_{l}=u_{l-1}\}$. We will ultimately produce a quiver with $|J_{u,\bm\beta}|$ vertices.
\end{defn}

One can prove (e.g. \cite{GLSBS22}) that the letters with indices not in $J_{u,\bm\beta}$ produce the rightmost subexpression, also called the positive distinguished subexpression for $u$ inside $\bm\beta$, see \cite{Deodhar}. %Note also that $u$ is a subword of $\bm\beta$ if and only if $u_0=1$. %In other words, $u\le \bm\beta$.
%Let us fix $u$ and $\bm\beta$ such that $u\le \bm\beta$.
%{\color{red} Phrase better? Also, $u$ is a subword of $\beta$ iff $u_0=1$.}

\begin{remark}
   Note that $u$ is a subword of $\bm\beta$ if and only if $u_0=1$. Then the braid variety $\X$ is not empty if and only if $\bm{\beta}$ contains a reduced expression for $u$ as a subword.
\end{remark}

    Throughout this paper, we will always assume that $u$ is a subword of $\bm\beta$.

\begin{defn}\label{defn: 3d plabic}
Let $\bm{\beta}$ be a given specific braid word and $u\in S_n$ be a subword of $\beta$ as above. The 3D plabic graph $G_{u,\bm{\beta}}$ is a projection of a graph in $\mathbb{R}^3$ to $\mathbb{R}^2$, drawn by the following procedure:
% \item For each column of $(n-k)^{k}$, put strands starting on the lower left corner of each box in this column, so that there are precisely $k$ strands. We join the strands to corners in the right border of this column, as follows:
\begin{enumerate}
    \item [(a)] Put $n$ strands starting from the leftmost border. For these $n-1$ regions on the leftmost border, we call them \newword{boundary regions}. We number them from bottom to top using $1,2,\ldots,n$.
    \item [(b)] Now we go from $j=1$ to $j=k$. For $j\in J_{u,\bm\beta}$, draw a \textit{bridge} $d$ between the $i_j$th strand and the $(i_j+1)$th strand. %{\color{red} repeated}. 
    See Figure (left) below. Otherwise, draw a \textit{positive crossing} between the $i_j$th strand and the $(i_j+1)$th strand, as in the Figure (right) below.     
\end{enumerate}
\end{defn}
\begin{figure}[!h]
    \centering

    \begin{tikzpicture}[scale=1.2]
    \filldraw[color=lightgray] (-0.1,0.05) -- (-0.1,0.95) -- (1.1,0.95) -- (1.1,0.05) -- cycle;
    \draw (-0.1,0.05) -- (1.1,0.05);
    \draw (-0.1,0.95) -- (1.1,0.95);
    \draw [ultra thick] (0.5,0.95) to (0.5,0.05);
            \node[scale =0.5, circle, draw=black, fill=white] (W) at (0.5,0.95) {};
\node[scale =0.5, circle, draw=black, fill=black] (B) at (0.5,0.05) {};
    \end{tikzpicture}
    \qquad\qquad
    \begin{tikzpicture}
    \filldraw[color=lightgray] (-0.1,-0.1) -- (-0.1,1.1) -- (1.1,1.1) -- (1.1,-0.1) -- cycle;
      \draw[color=blue, thick] (-0.1,1) --(0,1);
  \draw[color=blue, thick] (1,1) --(1.1,1);
   \draw[color=blue, thick] (-0.1,0) --(0,0);
  \draw[color=blue, thick] (1,0) --(1.1,0);
\draw[color=blue, thick] (0,1) to [out=0,in=180] (1,0);
\draw[color=lightgray, line width=5] (0,0) to [out=0,in=180] (1,1);
\draw[color=blue, thick] (0,0) to [out=0,in=180] (1,1);
    \end{tikzpicture}
\end{figure}
\begin{remark}
    Braid words $\bm{\beta}$ and $\bm{\beta'}$ for the same braid $\beta$ produce different 3D plabic graphs. However, they are related by local moves. See \cite[Section 4]{GLSBS22}.
    %\SK{I think $H$ matrix should not change, but is the $D$ matrix also invariant under the braid moves? }
\end{remark}
\begin{exam}\label{ex: running ex intro}
    Let us consider the following running example in Figure \ref{fig: running ex}. Take $u=(1,2,5,4,3,6)=s_4s_3s_4\in S_6$ %{\color{red} added $s$} 
    and $\bm\beta=(5,4,3,2,1,4,3,\textcolor{blue}{4},2,5,\textcolor{blue}{3},\textcolor{blue}{4},5)$. The blue letters indicate the indices not in $J_{u,\beta}$, thus $J_{u,\bm\beta}=[13]\setminus \{8,11,12\}$. The resulting 3D plabic graph $G_{u,\bm\beta}$ is drawn in Figure \ref{fig: running ex}. Note that $G_{u,\bm\beta}$ has $5$ boundary regions on the left. 
\end{exam}

Next, we construct a soap film associated to each bridge in $G_{u,\bm\beta}$, which was introduced in \cite[Section 3.4]{GLSBS22} in a different terminology. 
%Using the bijection between the set $J_{u,\bm\beta}$ and bridges in $G_{u,\bm\beta}$, we use $d_j$ to denote a bridge in $G_{u,\bm\beta}$ that are in bijection with $j\in J_{u,\bm\beta}$.

%{\color{red} How is this different?}

\begin{defn}\label{defn: soap}
    Given a $G_{u,\bm\beta}$ and a bridge $d$ from $G_{u,\bm\beta}$, define the \newword{soap film} $C_{d}$ associated to a bridge $d$ as follows:
    \begin{enumerate}
        \item The soap film $C_{d}$ starts at a bridge $d$, and it continues to the left, clinging to the two horizontal strands that the bridge is attached to. 
        %on the left to the bridge, and is bounded by two strands that have a white vertex and a black vertex respectively. It propagates from right to left.
        \item 
        %If $C_d$ encounters another bridge, a soap film $C_d$ propagates locally according to the cases in Figure {\ref{fig: propagation rule}}. 
There are seven possibilities for how the soap film can appear immediately to the right of any bridge. Figure \ref{fig: propagation rule} shows how the soap film changes as it moves past the bridge in each of these seven scenarios. It is also possible that $C_d$ is above both horizontal strands or below both horizontal strands, In this case, nothing happens.
       % The propagation of a soap film only changes near the bridges, otherwise it just cling to the strands. See Figure \ref{fig: propagation rule}.
    \end{enumerate}
 The soap film is a region bounded by the edges in $G_{u,\beta}$, with additional information of whether it passes over or under the horizontal strands.
\end{defn}

    \begin{remark}
    To clarify, if a soap film encounters a positive crossing along its way, a soap film might become wider or smaller, see Figure \ref{fig:behavior at crossing}.
\end{remark}

\begin{figure}[hb!]
 \begin{tikzpicture}[scale=0.95]

%first bridge
\filldraw[color=piggypink] (-7,1) -- (-6,1) -- (-6,2) -- (-7,2) -- cycle;
%\draw [ultra thick] (-8,0) to (-6,0);
\draw [thick] (-8,1) to (-6,1);
\draw [thick] (-8,2) to (-6,2);
%\draw [ultra thick] (-8,3) to (-6,3);

 \draw [ultra thick] (-7,1) to (-7,2);
\node [scale =0.5, circle, draw=black, fill=black] (b1) at (-7,1) {};
\node [scale =0.5, circle, draw=black, fill=white] (w1) at (-7,2) {};
%\draw[ultra thick, color=frenchrose] (-6.5,1.9) to (-6.5, 1.1);
\draw [ultra thick, color=frenchrose] (-6.7,1.9) .. controls (-6.5,1.7) and (-6.5,1.3) .. (-6.7,1.1);
\draw [ultra thick, color=frenchrose] (-6.3,1.9) .. controls (-6.1,1.7) and (-6.1,1.3) .. (-6.3,1.1);
%maybe need a dot?

\draw[dashed, color=blue] (-5.8,2.5) to (-5.8,0.5) ;

%second bridges
\filldraw[color=piggypink] (-3.6,2) -- (-4.6,2) -- (-4.6,1) -- (-5.6,1) -- (-5.6,0.5)-- (-3.6,0.5) --(-3.6,2)-- cycle;
%\draw [ultra thick] (-5.6,0) to (-3.6,0);
\draw [thick] (-5.6,1) to (-3.6,1);
\draw [thick] (-5.6,2) to (-3.6,2);
%\draw [ultra thick] (-5.6,3) to (-3.6,3);

\draw [ultra thick] (-4.6,1) to (-4.6,2);
\node [scale =0.5, circle, draw=black, fill=black] (b2) at (-4.6,1) {};
\node [scale =0.5, circle, draw=black, fill=white] (w2) at (-4.6,2) {};
%\draw[ultra thick, color=frenchrose] (-4.1,1.9) to (-4.1, 0.5);
%\draw[ultra thick, color=frenchrose] (-5.1,0.9) to (-5.1, 0.5);

\draw [ultra thick, color=frenchrose] (-4.3,1.9) .. controls (-4.1,1.4) and (-4.1,1) .. (-4.3, 0.5);
\draw [ultra thick, color=frenchrose] (-3.9,1.9) .. controls (-3.7,1.4) and (-3.7,1) .. (-3.9, 0.5);
%\draw  [ultra thick, color=frenchrose](-5,0.9) .. controls (-4.8,0.7) .. (-5,0.5);
\draw [ultra thick, color=frenchrose] (-5,0.9)  .. controls (-4.9,0.8) and (-4.9,0.7) .. (-5,0.5);
%\draw  [ultra thick, color=frenchrose](-5.4,0.9) .. controls (-5.2,0.7) .. (-5.4,0.5);
\draw [ultra thick, color=frenchrose] (-5.4,0.9) .. controls (-5.3,0.8) and (-5.3,0.7) .. (-5.4, 0.5);
%\draw [ultra thick, color=frenchrose] (-2.5,2.5) .. controls (-2.4,2.4) and (-2.4,2.3) .. (-2.5, 2.1);
\draw[dashed, color=blue] (-3.4,2.5) to (-3.4,0.5) ;

%third bridge
%\draw [ultra thick] (-3.2,0) to (-1.2,0);
\filldraw[color=piggypink] (-1.2,2.5) -- (-3.2,2.5) -- (-3.2,2) -- (-2.2,2) -- (-2.2,1) --(-1.2,1)--(-1.2,2.5)-- cycle;
\draw [thick] (-3.2,1) to (-1.2,1);
\draw [thick] (-3.2,2) to (-1.2,2);
%\draw [ultra thick] (-3.2,3) to (-1.2,3);

\draw [ultra thick] (-2.2,1) to (-2.2,2);
\node [scale =0.5, circle, draw=black, fill=black] (b2) at (-2.2,1) {};
\node [scale =0.5, circle, draw=black, fill=white] (w2) at (-2.2,2) {};
%\draw[ultra thick, color=frenchrose] (-2.7,2.5) to (-2.7, 2.1);
\draw [ultra thick, color=frenchrose] (-2.5,2.5) .. controls (-2.4,2.4) and (-2.4,2.3) .. (-2.5, 2.1);
\draw [ultra thick, color=frenchrose] (-2.9,2.5) .. controls (-2.8,2.4) and (-2.8,2.3) .. (-2.9, 2.1);
%\draw[ultra thick, color=frenchrose] (-1.7,2.5) to (-1.7, 2.1);
%\draw[ultra thick, color=frenchrose] (-1.7,1.9) to (-1.7, 1.1);
\draw [ultra thick, color=frenchrose] (-1.9,2.5) .. controls (-1.8,2.4) and (-1.8,2.3) .. (-1.9, 2.1);
\draw [ultra thick, color=frenchrose] (-1.5,2.5) .. controls (-1.4,2.4) and (-1.4,2.3) .. (-1.5, 2.1);

\draw [ultra thick, color=frenchrose] (-1.9,1.9) .. controls (-1.7,1.7) and (-1.7,1.3) .. (-1.9, 1.1);
\draw [ultra thick, color=frenchrose] (-1.5,1.9) .. controls (-1.3,1.7) and (-1.3,1.3) .. (-1.5, 1.1);

\draw[dashed, color=blue] (-1,2.5) to (-1,0.5) ;

%fourth bridge
\filldraw[color=piggypink] (1.2,2.5) -- (-0.8,2.5) -- (-0.8,2) -- (0.2,2) --(0.2,1)--(-0.8,1)--(-0.8,0.5)--(1.2,0.5)-- 
cycle;
%\draw [ultra thick] (-0.8,0) to (1.2,0);
\draw [thick] (-0.8,1) to (1.2,1);
\draw [thick]  (-0.8,2) to (1.2,2);
%\draw [ultra thick] (-0.8,3) to (1.2,3);

\draw [ultra thick] (0.2,1) to (0.2,2);
\node [scale =0.5, circle, draw=black, fill=black] (b2) at (0.2,1) {};
\node [scale =0.5, circle, draw=black, fill=white] (w2) at (0.2,2) {};
%\draw[ultra thick, color=frenchrose] (-0.3,2.5) to (-0.3, 2.1);

\draw [ultra thick, color=frenchrose] (-0.5,2.5) .. controls (-0.4,2.4) and (-0.4,2.3) .. (-0.5, 2.1);
\draw [ultra thick, color=frenchrose] (-0.1,2.5) .. controls (0,2.4) and (0,2.3) .. (-0.1, 2.1);

%\draw[ultra thick, color=frenchrose] (-0.3,0.9) to (-0.3, 0.5);

\draw [ultra thick, color=frenchrose] (-0.5,0.9) .. controls (-0.4,0.8) and (-0.4,0.7) .. (-0.5, 0.5);
\draw [ultra thick, color=frenchrose] (-0.1,0.9) .. controls (0,0.8) and (0,0.7) .. (-0.1, 0.5);

%\draw[ultra thick, color=frenchrose] (0.7,2.5) to (0.7, 2.1);

\draw [ultra thick, color=frenchrose] (0.5,2.5) .. controls (0.6,2.4) and (0.6,2.3) .. (0.5, 2.1);
\draw [ultra thick, color=frenchrose] (0.9,2.5) .. controls (1,2.4) and (1,2.3) .. (0.9, 2.1);

%\draw[ultra thick, color=frenchrose] (0.7,1.9) to (0.7, 0.5);

\draw [ultra thick, color=frenchrose] (0.5,1.9) .. controls (0.7,1.4) and (0.7,1) .. (0.5, 0.5);
\draw [ultra thick, color=frenchrose] (0.9,1.9) .. controls (1.1,1.4) and (1.1,1) .. (0.9, 0.5);

\draw[dashed, color=blue] (1.4,2.5) to (1.4,0.5) ;

%fifth bridge
\filldraw[color=piggypink] (1.6,0.5) -- (3.6,0.5) -- (3.6,2) -- (1.6,2) -- cycle;
%\draw [ultra thick] (1.6,0) to (3.6,0);
\draw [thick] (1.6,1) to (3.6,1);
\draw [thick]  (1.6,2) to (3.6,2);
%\draw [ultra thick] (1.6,3) to (3.6,3);

\draw [ultra thick] (2.6,1) to (2.6,2);
\node [scale =0.5, circle, draw=black, fill=black] (b2) at (2.6,1) {};
\node [scale =0.5, circle, draw=black, fill=white] (w2) at (2.6,2) {};
%\draw[ultra thick, color=frenchrose] (3.1,1.9) to (3.1, 1.1);

\draw [ultra thick, color=frenchrose] (3.3,1.9) .. controls (3.5,1.7) and (3.5,1.3) .. (3.3,1.1);
\draw [ultra thick, color=frenchrose] (2.9,1.9) .. controls (3.1,1.7) and (3.1,1.3) .. (2.9,1.1);

%\draw[ultra thick, color=frenchrose] (3.1,0.9) to (3.1, 0.5);

\draw [ultra thick, color=frenchrose] (3.3,0.9) .. controls (3.4,0.8) and (3.4,0.7) .. (3.3, 0.5);
\draw [ultra thick, color=frenchrose] (2.9,0.9) .. controls (3,0.8) and (3,0.7) .. (2.9, 0.5);

%\draw[ultra thick, color=frenchrose] (2.1,1.9) to (2.1, 1.1);

\draw [ultra thick, color=frenchrose] (2.3,1.9) .. controls (2.5,1.7) and (2.5,1.3) .. (2.3,1.1);
\draw [ultra thick, color=frenchrose] (1.9,1.9) .. controls (2.1,1.7) and (2.1,1.3) .. (1.9,1.1);

%\draw[ultra thick, color=frenchrose] (2.1,0.9) to (2.1, 0.5);

\draw [ultra thick, color=frenchrose] (2.3,0.9) .. controls (2.4,0.8) and (2.4,0.7) .. (2.3, 0.5);
\draw [ultra thick, color=frenchrose] (1.9,0.9) .. controls (2.0,0.8) and (2.0,0.7) .. (1.9, 0.5);

\draw[dashed, color=blue] (3.8,2.5) to (3.8,0.5) ;

%six bridge
\filldraw[color=piggypink] (4,1) -- (6,1) -- (6,2.5) -- (4,2.5) -- cycle;
%\draw [ultra thick] (4,0) to (6,0);
\draw [thick]  (4,1) to (6,1);
\draw [thick]  (4,2) to (6,2);
%\draw [ultra thick] (4,3) to (6,3);

\draw [ultra thick] (5,1) to (5,2);
\node [scale =0.5, circle, draw=black, fill=black] (b2) at (5,1) {};
\node [scale =0.5, circle, draw=black, fill=white] (w2) at (5,2) {};
%\draw[ultra thick, color=frenchrose] (5.5,2.5) to (5.5, 1.1);

\draw [ultra thick, color=frenchrose] (5.7,2.5) .. controls (5.9,2) and (5.9,1.6) .. (5.7,1.1);
\draw [ultra thick, color=frenchrose] (5.3,2.5) .. controls (5.5,2) and (5.5,1.6) .. (5.3,1.1);
%\draw [ultra thick, color=frenchrose] (2.9,1.9) .. controls (3.1,1.7) and (3.1,1.3) .. (2.9,1.1);

%\draw[ultra thick, color=frenchrose] (4.5,2.5) to (4.5, 1.1);

\draw [ultra thick, color=frenchrose] (4.6,2.5) .. controls (4.8,2) and (4.8,1.6) .. (4.6,1.1);
\draw [ultra thick, color=frenchrose] (4.2,2.5) .. controls (4.4,2) and (4.4,1.6) .. (4.2,1.1);

\draw[dashed, color=blue] (6.2,2.5) to (6.2,0.5) ;

%seven bridge
\filldraw[color=piggypink] (6.4,0.5) -- (8.4,0.5) -- (8.4,2.5) -- (6.4,2.5) -- cycle;
%\draw [ultra thick] (6.4,0) to (8.4,0);
\draw [thick]  (6.4,1) to (8.4,1);
\draw [thick]  (6.4,2) to (8.4,2);
%\draw [ultra thick] (6.4,3) to (8.4,3);

\draw [ultra thick] (7.4,1) to (7.4,2);
\node [scale =0.5, circle, draw=black, fill=black] (b2) at (7.4,1) {};
\node [scale =0.5, circle, draw=black, fill=white] (w2) at (7.4,2) {};
%\draw[ultra thick, color=frenchrose] (7.9,2.5) to (7.9,1.1) {};

\draw [ultra thick, color=frenchrose] (7.7,2.5) .. controls (7.9,2) and (7.9,1.6) .. (7.7,1.1);
\draw [ultra thick, color=frenchrose] (8.1,2.5) .. controls (8.3,2) and (8.3,1.6) .. (8.1,1.1);

%\draw[ultra thick, color=frenchrose] (7.9,0.9) to (7.9,0.5) {};

\draw [ultra thick, color=frenchrose] (7.7,0.9) .. controls (7.8,0.8) and (7.8,0.7) .. (7.7, 0.5);
\draw [ultra thick, color=frenchrose] (8.1,0.9) .. controls (8.2,0.8) and (8.2,0.7) .. (8.1, 0.5);

%\draw[ultra thick, color=frenchrose] (6.9,2.5) to (6.9,1.1) {};

\draw [ultra thick, color=frenchrose] (6.6,2.5) .. controls (6.8,2) and (6.8,1.6) .. (6.6,1.1);
\draw [ultra thick, color=frenchrose] (7,2.5) .. controls (7.2,2) and (7.2,1.6) .. (7,1.1);

%\draw[ultra thick, color=frenchrose] (6.9,0.9) to (6.9,0.5) {};

\draw [ultra thick, color=frenchrose] (6.6,0.9) .. controls (6.7,0.8) and (6.7,0.7) .. (6.6, 0.5);
\draw [ultra thick, color=frenchrose] (7,0.9) .. controls (7.1,0.8) and (7.1,0.7) .. (7, 0.5);

    \end{tikzpicture}
    \caption{ %A figure showing that the propagation of a soap film near a bridge may or may not change, depending on how it approaches a bridge from right to left. 
A soap film can be thought of as \textcolor{frenchrose}{light pink} region which can go over or under the horizontal strands of the graph $G_{u,\beta}$. The \textcolor{magenta}{darker pink} color indicates whether a soap film goes over or under.
%The \textcolor{magenta}{thick pink} color shows the change of a minuscule piece of soap film when it encounters a bridge. The
%boundary of a soap film in each cases is colored with \textcolor{frenchrose}{light pink} color. 
In short, if the soap film is going over the top strand, or under the bottom strands when it reaches the bridge, then the soap film does not change when it passes the bridge. Otherwise, the bridge is part of the boundary of the soap film.
%In other circumstances, a soap film is getting cut.
}
    \label{fig: propagation rule}
\end{figure}
%Recall the definition of a frozen vertex described in the beginning of this subsection. {\color{red} Hmm}

\begin{figure}
    \centering
   \begin{tikzpicture}
       %first bridge

%\draw[thick] (0,0) to (3,0);
%\draw[thick] (0,1) to (3,1);
\filldraw[color=piggypink] (2.5,2)--(0,2)--(0,0)--(1,0)--(2,1)--(2.5,1) -- cycle;
\draw[thick] (0,0) to (1,0);
\draw[thick] (2,0) to (3,0);

\draw[thick] (0,1) to (1,1);
\draw[thick] (2,1) to (3,1);

\draw[thick] (0,2) to (3,2);
\draw[ultra thick] (2.5,2) to (2.5,1);
\node [scale =0.5, circle, draw=black, fill=black] (b1) at (2.5,1) {};
\node [scale =0.5, circle, draw=black, fill=white] (w1) at (2.5,2) {};
\draw[color=blue, thick] (1,0) to [out=0,in=180] (2,1);
\draw[color=blue,thick] (1,1) to [out=0,in=180] (2,0);
%\draw[color=blue, thick] (-2.8,3) to [out=0,in=180] (-2.2,4);
\draw [ultra thick, color=frenchrose] (2.2,1.9) .. controls (2.0,1.7) and (2.0,1.3) .. (2.2,1.1);
 \draw [ultra thick, color=frenchrose] (1.5,1.9) .. controls (1.3,1.7) and (1.3,1.1) .. (1.5,0.6);
 \draw [ultra thick, color=frenchrose] (0.7,1.9) .. controls (0.5,1.3) and (0.5,0.7) .. (0.7,0.1);

%\draw [ultra thick] (-8,0) to (-6,0);

%\node [scale =0.5, circle, draw=black, fill=black] (b1) at (-7,1) {};
%\node [scale =0.5, circle, draw=black, fill=white] (w1) at (-7,2) {};
%\draw[ultra thick, color=frenchrose] (-6.5,1.9) to (-6.5, 1.1);
%\draw [ultra thick, color=frenchrose] (-6.7,1.9) .. controls (-6.5,1.7) and (-6.5,1.3) .. (-6.7,1.1);
%\draw [ultra thick, color=frenchrose] (-6.3,1.9) .. controls (-6.1,1.7) and (-6.1,1.3) .. (-6.3,1.1);
%maybe need a dot?
%\draw[color=blue, thick] (-2.8,4) to [out=0,in=180] (-2.2,3);
%\draw[color=white, line width=5] (-2.8,3) to [out=0,in=180] (-2.2,4);
%\draw[color=blue, thick] (-2.8,3) to [out=0,in=180] (-2.2,4);

   \end{tikzpicture}
    \caption{Behavior of soap film propagation when it encounters a crossing}
    \label{fig:behavior at crossing}
\end{figure}

%\textcolor{red}{Add additional figure about the behavior around the crossing, and add it to definition.}

The soap films are in bijection with the vertices in a quiver.
If the resulting soap film $C_d$ is a closed and bounded region, the bridge $d$ corresponds to a mutable vertex. Otherwise, $C_{d}$ corresponds to a frozen vertex. We therefore specify that in such case, a bridge $d$ is a \newword{frozen bridge}, to emphasize its relationship with an initial quiver. 
\begin{remark}\label{rmk: leftmost frozen}
Followed by the condition (2) of Definition \ref{defn: soap}, the leftmost bridge is always a frozen bridge. For future usage in Section \ref{subsec: auxiliary lemmas B} and Theorem \ref{thm: main thm}, let us call this leftmost frozen bridge by $d_L$.
\end{remark}
Let us denote by $\mathcal{C}:=\{C_d\}_{d\in G_{u,\bm\beta}}$ the set of soap films. For future usage, we now give a ordering on the set $C$ as follows.

\begin{defn}\label{defn: total ordering on C}
Suppose that $d_a,d_b$ are bridges chosen from the 3D plabic graph $G_{u,\bm \beta}$. We define an ordering $<$ on $\mathcal{C}$ satisfying the following conditions:
\begin{enumerate}
    \item If $d_a$ and $d_b$ are bridges of the same type (either both frozen or mutable) and $d_a$ appears on the right of $d_b$ in $G_{u,\bm\beta}$, then $C_{a}>C_b$.
    \item If $d_a$ is mutable and $d_b$ is frozen, then $C_a<C_b$. 
\end{enumerate}
\end{defn}

\begin{remark}\label{rmk: total ordering on C}
The above ordering $<$ is a total ordering on $\mathcal{C}$, thus one may regard $\mathcal{C}$ as an ordered set $\{C_1<\cdots <C_{m+f}\}$, where soap films $C_1,\ldots, C_m$ corresponds to mutable vertices and $C_{m+1},\ldots, C_{m+f}$ corresponds to frozen vertices in $Q_{u,\bm\beta}$. From now on, we relabel the vertices so that now a vertex $j$ in $Q_{u,\bm\beta}$ is in correspondence to the soap film $C_j$ of $Q_{u,\bm\beta}$. Note that this ordering is different from the one used in \cite{GLSBS22}, where soap films are labeled by $J_{u,\bm{\beta}}$.
\end{remark}

\begin{exam}\label{ex: leftmost frozen soap}
Remark \ref{rmk: leftmost frozen} implies that $d_L$ always corresponds to $C_{m+1}$.
\end{exam}
%For future usage, we now give a total ordering on a set $C$ satisfying the following conditions. 
%Let $a,b$ be given two selected numbers from $J_{u,\bm\beta}$. 

%read numbers in $J_{u,\beta}$ in a descending order 
%Let us denote $C_{\m_{i}}$ to be a soap film corresponding to the $i$th bridge in $G_{u,\beta}$. Similarly, we use $C_{\f_{j}}$ to denote the soap film associated to the $j$th frozen bridge in $G_{u,\beta}$.

\begin{exam}
    Continuing with Example \ref{ex: running ex intro}, one has $10$ soap films. For example, one can check from Figure \ref{fig: running ex} that the soap film $C_{{4}}$ is closed and bounded, whereas the soap film
    $C_{{10}}$ encompass the leftmost boundary, thus it is unbounded. %{\color{red} Draw these}
    In total, there are $6$ frozen bridges. See Figure \ref{fig: running ex}.
\end{exam}

Turning our attention to each region in $G_{u,\bm\beta}$, i.e., each connected component of $G_{u,\bm\beta}$, we obtain an initial quiver $Q_{u,\beta}$ from $G_{u,\bm\beta}$. We use the \textit{half arrow configuration} in Figure \ref{fig:half arrow config}.
%Let us denote it by $Q_{u,\bm{\beta}}$. Recall $J_{u,\beta}$ from Definition \ref{defn: pds}.
%\SK{ADD: Half arrow configuration and a propagation rule}

\begin{figure}
    \centering
   \begin{tikzpicture}
 \filldraw[color=piggypink] (1.5,0)  -- (4.5,0) -- (4.5,3) -- (1.5,3) -- cycle;
 \draw [thick] (1.5,0) to (4.5,0);
 \draw [thick] (1.5,1) to (4.5,1);
 \draw [thick] (1.5,2) to (4.5,2);
 \draw [thick] (1.5,3) to (4.5,3); 
\draw [ultra thick](3,1) to (3,2);
 \node [scale =0.5, circle, draw=black, fill=black] (b) at (3,1) {};
\node [scale =0.5, circle, draw=black, fill=white] (w) at (3,2) {};

 %\node (a) at (3,2.5) {$a_x, a_y$};
 %\node (a) at (4.5,1.5) {$b_x, b_y$};
  %\node (a) at (1.5,1.5) {$d_x, d_y$};
   %\node (a) at (3,0.5) {$c_x, c_y$};
 \draw[color=red, {Stealth[red]}-,thick] (3.5,1.7)-- (2.5,1.7)  ;
\draw[color=red, {Stealth[red]}-,thick] (3.5,1.3) -- (2.5,1.3) ;
\draw [color=red, {Stealth[red]}-,thick] (3.1, 2.3)-- (3.75, 1.7) ;
\draw [color=red, {Stealth[red]}-,thick] (2.25, 1.7)-- (2.9, 2.3) ;

\draw [color=red, {Stealth[red]}-,thick] (2.25, 1.2)--(2.9, 0.7);
\draw [color=red, {Stealth[red]}-,thick] (3.1, 0.7)--(3.75, 1.2);
 \end{tikzpicture}
    \caption{Half arrow configuration near a bridge. Each of six \textcolor{red}{red} arrows are half arrows, the name comes from the fact that their weights are $\frac{1}{2}$ when constructing the matrix $\Hmatrix$.}
    \label{fig:half arrow config}
\end{figure}

\begin{defn}\cite[Section 3.7]{GLSBS22}\label{def: half arrow quiver cit}
We construct an initial quiver $Q_{u,\bm\beta}$ from $G_{u,\bm\beta}$ via the following procedure.
\begin{enumerate}
    \item [(a)] 
    Around each bridge, place half arrows as
    %Consider the half arrows induced by the half arrow configuration near each bridge as 
    in Figure \ref{fig:half arrow config}. There are no half arrows near crossings. 
    %We ignore arrows if they  come from void region or they are directed to a void region. {\color{red} Did you define void region?}
    \item [(b)] Label $R$ with the set $\mathcal{R}:=\{i: \text{$C_i$ covers $R$}\}$.
    %Label each region $R$ with $i$ if $C_{i}$ covers $R$ as in Figure \ref{fig: running ex}. 
    If a region has no numbers written, we just ignore it. Repeat this process for every soap film $C_{i}$ of $G_{u,\bm\beta}$.
    %\item [(c)] Repeat this process for every relative cycles $C_d$ of $G_{u,\beta}$; If a region $R$ is covered by $C_{d_1},\ldots, C_{d_l}$, then label $R$ with $\Pi_{i=1}^{l} x_{d_i}$. Assume that $R'$ is labeled by $\Pi_{i=1}^{l'} x_{d'_{i}}$ by following the similar procedure.
    \item [(c)] For each half arrow $\mathcal{R}\to \mathcal{R'}$, draw one half arrow $i\to j$ for each $(i,j)\in \mathcal{R}\times \mathcal{R'}$.
    %half arrows from $x_d$ to $x_d'$
    %For neighboring region $R, R'$ and each pair $(x_{d},x_{d'})\in R\times R'$, draw a half arrow from $x_{d}$ to $x_{d'}$.
    
    \item [(e)] Delete a maximal collection of 2-cycles and loops. If there are $m$ half arrows from $i$ to $j$, treat this as $\frac{m}{2}$ arrows.
\end{enumerate}
\end{defn}

\begin{exam}
If we follow (c) in Definition \ref{def: half arrow quiver cit} around the leftmost bridge $d_L$ in Figure \ref{fig: running ex}, then we obtain the half arrows $\textcolor{red}{5}\to \textcolor{red}{10}$, $\textcolor{red}{5}\to \textcolor{red}{10}$, $\textcolor{red}{10}\to \textcolor{red}{10}$, $\textcolor{red}{10}\to \textcolor{red}{6}$, $\textcolor{red}{10}\to \textcolor{red}{5}$ and $\textcolor{red}{6}\to \textcolor{red}{5}$. %Summing all, this result in one half arrow from $\textcolor{red}{5}\to \textcolor{red}{10}$. 
\end{exam}

\begin{figure*}[ht!]
    \centering
\begin{tikzpicture}[scale=1]
%six strands
\filldraw[color=piggypink] (-4.5,1) -- (-2,1) -- (-2,2) -- (-4.5,2) -- cycle;
\filldraw[color=almond](-6.5,3)--(-2.8,3) to [out=0,in=180] (-2.2,4) to (-1.5,4) to (-1.5,5) to (-6,5) to (-6,4) to (-6.5,4); 

\draw [thick](-6.5,0) -- (0,0);
\draw [thick](-6.5,1) -- (0,1);
\draw [thick](-6.5,2) -- (0,2);

%\draw [thick](-6.5,3) -- (0,3);
%\draw [thick](-6.5,4) -- (0,4);

\draw [thick](-6.5,3) -- (-2.8,3);
\draw [thick](-6.5,4) -- (-2.8,4);
\draw [thick](-2.2,3) -- (0,3);
\draw [thick](-2.2,4) -- (0,4);

\draw [thick](-6.5,5) -- (0,5);

%1st bridge
\draw [ultra thick] (-6,4.95) to (-6,4.05);
\node[scale =0.5, circle, draw=black, fill=white] (W) at (-6,4.95) {};
\node[scale =0.5, circle, draw=black, fill=black] (B) at (-6,4.05) {};
\node [color=black] (B) at (-6,5.2) {5};

%its propagation
\node[color=frenchrose] (1) at (-6.25,4.5) {5};
\node[color=frenchrose] (2,10) at (-6,3.5) {6,10};
\node[color=frenchrose] (1) at (-6.25,2.5) {7};
\node[color=frenchrose] (1) at (-6.25,1.5) {8};
\node[color=frenchrose] (1) at (-6.25,0.5) {9};

%relative cycle
%\draw[color=red, thick,curve] (-6.5,4.1) -- (-6.1,4.1)--(-6.1,4.9)--(-6.5,4.9);

%2st bridge
\draw [ultra thick] (-5.5,3.95) to (-5.5,3.05);
\node[scale =0.5, circle, draw=black, fill=white] (W) at (-5.5,3.95) {};
\node[scale =0.5, circle, draw=black, fill=black] (B) at (-5.5,3.05) {};
\node[color=black] (W) at (-5.5,4.2) {6};
%3rd bridge
\draw [ultra thick] (-5,2.95) to (-5,2.05);
\node[scale =0.5, circle, draw=black, fill=white] (W) at (-5,2.95) {};
\node[scale =0.5, circle, draw=black, fill=black] (B) at (-5,2.05) {};
\node[color=black] (W) at (-5,3.2) {7};
%\node[color=frenchrose] (1) at (-5.5,2.5) {7};
%4th bridge
\draw [ultra thick] (-4.5,1.95) to (-4.5,1.05);
\node[scale =0.5, circle, draw=black, fill=white] (W) at (-4.5,1.95) {};
\node[scale =0.5, circle, draw=black, fill=black] (B) at (-4.5,1.05) {};
%\node[color=frenchrose] (1) at (-5.5,1.5) {8};
%\node[color=frenchrose] (1) at (-5,1.5) {8};
\node[color=black] (W) at (-4.5,2.2) {8};
%5th bridge
\draw [ultra thick] (-4,0.95) to (-4,0.05);
\node[scale =0.5, circle, draw=black, fill=white] (W) at (-4,0.95) {};
\node[scale =0.5, circle, draw=black, fill=black] (B) at (-4,0.05) {};
%\node[color=frenchrose] (1) at (-5.5,0.5) {9};
%\node[color=frenchrose] (1) at (-5,0.5) {9};
%\node[color=frenchrose] (1) at (-4.5,0.5) {9};
\node[color=black] (W) at (-4,1.2) {9};

%6th bridge
\draw [ultra thick] (-3.5,3.95) to (-3.5,3.05);
\node[scale =0.5, circle, draw=black, fill=white] (W) at (-3.5,3.95) {};
\node[scale =0.5, circle, draw=black, fill=black] (B) at (-3.5,3.05) {};
%\node[color=frenchrose] (1) at (-4.5,3.5) {6,10};
%\node[color=frenchrose] (1) at (-4,3.5) {1,10};
\node[color=frenchrose] (1) at (-4.5,3.5) {1,10};
\node[color=black] (W) at (-3.5,4.2) {1};
%7th bridge
\draw [ultra thick] (-3,2.95) to (-3,2.05);
\node[scale =0.5, circle, draw=black, fill=white] (W) at (-3,2.95) {};
\node[scale =0.5, circle, draw=black, fill=black] (B) at (-3,2.05) {};
\node[color=frenchrose] (1) at (-4.2,2.5) {2};

\draw[color=blue, thick] (-2.8,4) to [out=0,in=180] (-2.2,3);
%\draw[color=white, line width=5] (-2.8,3) to [out=0,in=180] (-2.2,4);
\draw[color=blue, thick] (-2.8,3) to [out=0,in=180] (-2.2,4);
\node[color=black] (W) at (-3.2,2.8) {2};

%8th bridge b
%\draw [ultra thick] (-2.5,3.95) to (-2.5,3.05);
%\node[scale =0.5, circle, draw=black, fill=white] (W) at (-2.5,3.95) {};
%\node[scale =0.5, circle, draw=black, fill=black] (B) at (-2.5,3.05) {};

%9th bridge
\draw [ultra thick] (-2,1.95) to (-2,1.05);
\node[scale =0.5, circle, draw=black, fill=white] (W) at (-2,1.95) {};
\node[scale =0.5, circle, draw=black, fill=black] (B) at (-2,1.05) {};
%\node[color=frenchrose] (1) at (-2.5,1.5) {3};
\node[color=frenchrose] (1) at (-3,1.5) {3};
%\node[color=frenchrose] (1) at (-3.5,1.5) {3};
%\node[color=frenchrose] (1) at (-4,1.5) {3};
\node[color=black] (W) at (-2,2.2) {3};

%10th bridge
\draw [ultra thick] (-1.5,4.95) to (-1.5,4.05);
\node[scale =0.5, circle, draw=black, fill=white] (W) at (-1.5,4.95) {};
\node[scale =0.5, circle, draw=black, fill=black] (B) at (-1.5,4.05) {};
%\node[color=frenchrose] (1) at (-2,4.5) {10};
\node[color=frenchrose] (1) at (-3,4.5) {10};
%\node[color=frenchrose] (1) at (-4,4.5) {10};
%\node[color=frenchrose] (1) at (-5,4.5) {10};
\node[color=frenchrose] (1) at (-3,3.5) {10};
\node[color=black] (W) at (-1.5,5.2) {10};

%10th bridge relative cycle
%\draw[color=cinnamon] (-6.5,3.1)--(-2.8,3.1) to [out=0,in=180] (-2.2,4.1) to (-1.6,4.1) to (-1.6,4.9) to (-6,4.9) to (-6,3.9) to (-6.5,3.9); 

 %\draw[color=white, line width=5] (-2.8,4) to (-2.2,4);

%11th bridge b
%\draw [ultra thick] (-1,2.95) to (-1,2.05);
%\node[scale =0.5, circle, draw=black, fill=white] (W) at (-1,2.95) {};
%\node[scale =0.5, circle, draw=black, fill=black] (B) at (-1,2.05) {};
 \draw[color=white, line width=5] (-1.3,3) to (-0.7,3);
 \draw[color=white, line width=5] (-1.3,2) to (-0.7,2);
\draw[color=blue, thick] (-1.3,3) to [out=0,in=180] (-0.7,2);
\draw[color=white, line width=5] (-1.3,2) to [out=0,in=180] (-0.7,3);
\draw[color=blue, thick] (-1.3,2) to [out=0,in=180] (-0.7,3);

%12th bridge b
%\draw [ultra thick] (-0.5,3.95) to (-0.5,3.05);
%\node[scale =0.5, circle, draw=black, fill=white] (W) at (-0.5,3.95) {};
%\node[scale =0.5, circle, draw=black, fill=black] (B) at (-0.5,3.05) {};
 \draw[color=white, line width=5] (-0.7,4) to (-0.2,4);
 \draw[color=white, line width=5] (-0.7,3) to (-0.2,3);
\draw[color=blue, thick] (-0.7,4) to [out=0,in=180] (-0.2,3);
\draw[color=white, line width=5] (-0.7,3) to [out=0,in=180] (-0.2,4);
\draw[color=blue, thick] (-0.7,3) to [out=0,in=180] (-0.2,4);

%13th bridge
\draw [ultra thick] (0,4.95) to (0,4.05);
\node[scale =0.5, circle, draw=black, fill=white] (W) at (0,4.95) {};
\node[scale =0.5, circle, draw=black, fill=black] (B) at (0,4.05) {};
\node[color=black] (W) at (0,5.2) {4};
%propagation
\node[color=frenchrose] (1) at (-0.5,4.5) {4};
%\node[color=frenchrose] (1) at (-1,4.5) {4};
%\node[color=frenchrose] (1) at (-1,3.5) {4};
\node[color=frenchrose] (1) at (-1.5,3.5) {4};
%\node[color=frenchrose] (1) at (-2,3.5) {4};
%\node[color=frenchrose] (1) at (-2.5,2.5) {4};
\node[color=frenchrose] (1) at (-2,2.5) {4};
%\node[color=frenchrose] (1) at (-1.5,2.5) {4};
\end{tikzpicture}
\caption{Soap films imposed on a 3D plabic graph $G_{u,\bm{\beta}}$ associated to a permutation $u=s_4s_3s_4$ and $\bm\beta=(5,4,3,2,1,4,3,\textcolor{blue}{4},2,5,\textcolor{blue}{3},\textcolor{blue}{4},5)$. A number $i$ around a bridge indicates its correspondence to the vertex $i$, following the convention in Remark \ref{rmk: ordering rmk}.
%%(\sigma_5\sigma_4\sigma_3\sigma_2\sigma_1\sigma_4\sigma_3\textcolor{blue}{\sigma_4}\sigma_2\sigma_5\textcolor{blue}{\sigma_3\sigma_4}\sigma_5$.
%The number $j$ drawn in a vertex shows its correspondence with the $j$th letter of $\bm{\beta}$.
For example, the soap film $C_{10}$, indicated by the {brown} region, is the unbounded region with a number {frenchrose}{10}. One can see that a number \textcolor{frenchrose}{10} propagates to the leftmost boundary of $G_{u,\bm\beta}$ and thus the vertex $10$ is frozen, thus the vertex {$10$} is {frozen}. Similarly, soap film $C_{3}$, colored with a {pink} color, is closed and bounded, thus the vertex {$3$} is {mutable}.} \label{fig: running ex}
\end{figure*}

%\textcolor{red}{Add numbers around each bridge.}

\begin{thm}\cite[Section 3.7]{GLSBS22}\label{thm: braid variety cluster}
The cluster algebra $\mathcal{A}(Q_{u,\bm\beta})$ from an initial quiver $Q_{u,\bm\beta}$ defined above is isomorphic to the coordinate ring of $X_{u,\bm\beta}$.
\end{thm}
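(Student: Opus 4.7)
The plan is to build candidate cluster variables directly from the 3D plabic graph, verify one exchange relation by a local computation, and then invoke a starfish-type uniqueness result to promote that to an isomorphism of algebras. Since the theorem is cited from \cite{GLSBS22}, the point of the sketch is to identify what is actually being proved.

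First I would assign to each soap film $C_j \in C$ a regular function $x_j \in \mathcal{O}(X_{u,\bm\beta})$. The natural candidates come from reading off, near the bridge $d_j$ generating $C_j$, a flag minor of the braid matrix $B_{\bm\beta}(z_1,\ldots,z_k)$: the row set is determined by the two horizontal strands attached to $d_j$ together with the strands the soap film passes over/under, and the column set is determined by which crossings $C_j$ crosses before reaching the left boundary. The frozen variables are exactly those $x_j$ for which $C_j$ is unbounded. A preliminary lemma to establish is that these $x_j$ are indeed regular on $X_{u,\bm\beta}$, that the frozens are nowhere vanishing, and that $(x_1,\ldots,x_{m+f})$ form a transcendence basis of $\mathbb{C}(X_{u,\bm\beta})$; this uses that $\dim X_{u,\bm\beta}=m$ plus a direct computation that the differentials $dx_1,\ldots,dx_m$ are linearly independent at some point (e.g.\ the base point corresponding to all $z_i=0$ on the positive distinguished subexpression).

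Next I would verify, for each mutable vertex $k$, the exchange relation
\[
x_k x_k' \;=\; \prod_{\alpha: s(\alpha)=k} x_{t(\alpha)} + \prod_{\alpha': t(\alpha')=k} x_{s(\alpha')},
\]
where the products are read off from $Q_{u,\bm\beta}$ as in Definition \ref{def: half arrow quiver cit}. The strategy is to use the local moves on 3D plabic graphs developed in \cite[Section 4]{GLSBS22}: a single mutation at $k$ corresponds to a local slide of the bridge $d_k$ past an adjacent crossing, and after this slide the new bridge defines a function $x_k'$ which is again a minor of the braid matrix. The exchange relation then reduces to a Pl\"ucker-type identity among $3\times 3$ (or $2\times 2$) minors of the $B_{i_\ell}(z_\ell)$ factors; this is the step that is genuinely computational.

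With the exchange relations in hand, the seed $(\mathbf{x},Q_{u,\bm\beta})$ lives inside $\mathcal{O}(X_{u,\bm\beta})$, giving a map $\varphi: \mathcal{A}(Q_{u,\bm\beta}) \to \mathcal{O}(X_{u,\bm\beta})$. To upgrade this embedding to an isomorphism I would use a Muller/starfish-style criterion: because $Q_{u,\bm\beta}$ has really full rank (\cite[Theorem 2.1]{GLSBS22}) and the cluster variables $x_1,\ldots,x_{m+f}$ together with the $m$ one-step mutations $x_1',\ldots,x_m'$ are coprime regular functions whose vanishing loci cover $X_{u,\bm\beta}$ in codimension one, the image of $\varphi$ exhausts $\mathcal{O}(X_{u,\bm\beta})$. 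Smoothness of $X_{u,\bm\beta}$ (noted in the remark after Definition \ref{def: braid var}) is crucial here to ensure the codimension-one criterion is applicable.

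The main obstacle is the exchange relation step: verifying the Pl\"ucker identity directly in coordinates on $X_{u,\bm\beta}$ is delicate because the soap-film description of $x_k$ encodes both a choice of rows (tracking over/under crossings) and a choice of columns (tracking the horizontal span of the soap film), and one must check that the two terms on the right-hand side arise from exactly the quiver arrows of $Q_{u,\bm\beta}$ after the half-arrow cancellations in Definition \ref{def: half arrow quiver cit}. Getting the signs, frozen contributions and multiplicities to match the combinatorially defined $b_{i,j}$ is where the bulk of the work in \cite{GLSBS22} lies.
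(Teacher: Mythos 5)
This statement is not proved in the paper at all: Theorem \ref{thm: braid variety cluster} is imported verbatim from \cite[Section 3.7]{GLSBS22}, and the author uses it as a black box. So there is no internal proof to compare your sketch against; what you have written is a reconstruction of the external argument, and it should be judged on those terms. As such a reconstruction, your outline does identify the right ingredients (functions attached to soap films, with frozens corresponding to unbounded films; exchange relations checked by local computations tied to moves on the 3D plabic graph; a really-full-rank plus codimension-one covering criterion to upgrade an embedding of the seed to an equality of algebras), and you are right that the bulk of the work lies in matching the two monomials of the exchange relation to the half-arrow combinatorics of Definition \ref{def: half arrow quiver cit}.

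There are two concrete gaps in the plan as stated. First, the starfish-type criterion you invoke does not do what you claim: under the hypotheses you list (smoothness, coprimality of the $x_i$ and the one-step mutations $x_k'$, codimension-one covering) it yields $\mathcal{O}(X_{u,\bm\beta})\subseteq \mathcal{U}(Q_{u,\bm\beta})$, the \emph{upper} cluster algebra, not the cluster algebra $\mathcal{A}(Q_{u,\bm\beta})$ itself. To conclude $\mathcal{A}(Q_{u,\bm\beta})=\mathcal{O}(X_{u,\bm\beta})$ one additionally needs $\mathcal{A}=\mathcal{U}$, which in \cite{GLSBS22} (as in \cite{CGGLSS22}) is obtained by a separate argument, e.g.\ local acyclicity in the sense of Muller or an induction on the braid word via deletion of letters; your sketch omits this step entirely, and it is not a formality. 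Second, the assertion that ``a single mutation at $k$ corresponds to a local slide of the bridge $d_k$ past an adjacent crossing'' oversimplifies the situation: the local moves of \cite[Section 4]{GLSBS22} relate 3D plabic graphs for \emph{different braid words of the same braid}, and only certain mutations of $Q_{u,\bm\beta}$ are realized by such moves; verifying the exchange relation for an arbitrary mutable vertex of the initial seed requires the explicit determinantal identities for the soap-film variables rather than a purely diagrammatic slide. Neither gap is fatal to the overall strategy, but both correspond to substantial portions of the cited proof that your outline passes over.
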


\begin{remark}\label{rmk: arrow convention is different}
Note that our half arrow configuration in Figure \ref{fig:half arrow config} is the opposite of the half arrow convention in \cite[Section 3.7]{GLSBS22}. However, this does not change the cluster algebra.
%The arrows are all opposite from that of \cite{GLSBS22}, so one obtains an opposite quiver of that of \cite{GLSBS22}. However, it is generally known that an opposite quiver induces an isomorphic cluster algebra, thus the cluster algebra $\mathcal{A}(Q_{u,\bm\beta})$ is still isomorphic to the coordinate ring of $\X$. 
\end{remark}

%{\color{red}
\begin{remark}
\label{rmk: arrows frozen}
As in \cite{CGGLSS22}, the quiver $Q_{u,\bm\beta}$ may contain (half) arrows between the frozen vertices. %Also, one can prove that the number of half arrows between the vertices where at least one vertex is mutable is even. 
However, the number of arrows between the mutable vertices, or the number of arrows between a mutable vertex and a frozen vertex is an integer. 
\end{remark}
%}

%Recall that we are considering $\{C_{d_j}\}_{j\in J_{u,\beta}}$ as an ordered set $\{C_{\m_1}>\cdots> C_{\m_m}>C_{\f_1}>\cdots>C_{\f_{f}}\}$.
%where $f$ (resp. $m$) denotes the number of frozen (resp. mutable) vertices in $Q_{u,\beta}$. 

\begin{figure}
    \centering
   \begin{tikzpicture}[squarednode/.style={rectangle, draw=blue!60, fill=blue!5, very thick, minimum size=5mm},
]]
\node[draw,squarednode] (1) at (0,5){5};
\node[draw, squarednode] (2) at (0.5,4){6};
\node[draw, squarednode] (3) at (1,3){7};
\node[draw, squarednode] (4) at (1.5,2){8};
\node[draw, squarednode] (5) at (2,1){9};
\node[draw, circle, red] (6) at (2.5,4){1};
\node[draw, circle, red] (7) at (3,3){2};
\node[draw, circle, red] (9) at (4,2){3};
\node[draw, squarednode] (10) at (4.5,5){10};
\node[draw, circle, red] (13) at (6,5){4};
\draw[->] (2) to (6);
\draw[->] (6) to (3);
\draw[->] (3) to (7);
\draw[->] (7) to (4);
\draw[->] (4) to (9);
\draw[->] (9) to (5);
\draw[->] (7) to (6);
\draw[->] (9) to (7);
\draw[->] (7) to (13);
\draw[->] (13) to (9);  
\draw[->, blue!75] (1) to (10);  
\draw[->, blue!75] (5) to (4);  
\draw[->, blue!75] (4) to (3);  
\draw[->, blue!75] (3) to (2);
\draw[->, blue!75] (2) to (1);  
\end{tikzpicture}
    \caption{The quiver $Q_{u,\beta}$ from Figure \ref{fig: running ex} following from $G_{u,\beta}$ and Definition \ref{def: half arrow quiver cit}. The green color indicates that the corresponding vertex is mutable, whereas the red color indicates that the corresponding vertex is frozen. The number $j$ written in a vertex indicates its correspondence with a soap film $C_j$, defined in Remark \ref{rmk: total ordering on C}. The red arrows are half arrows as appeared in Figure \ref{fig:half arrow config}.}
    \label{fig:running ex quiver}
\end{figure}
%\textcolor{red}{use box to indicate the frozen vertices.}
%The following matrix is defined in this regard. We will continue to keep this ordering convention in the later section as well.

%\subsection{Rank of $\tilde{B}$ matrix of a braid variety quiver}
% 
%Now suppose that $Q_{u,\beta}$ is an ice quiver with $|Q_{0}|=m+f$ and $|Q_{0}^f|=f$. 

%Given $Q_{u,\beta}$, we shall define a matrix $\Hmatrix$ from $Q_{u,\beta}$.

%\SK{Talk about relabeling permutation and say that some of them are crossings}

%\textcolor{red}{Naming change?}

\begin{defn}\label{defn: H matrix}
The \newword{half weight matrix} $\Hmatrix$ is defined by 
\[
\Hmatrix=\left(b_{i,j}\right)_{1\le i\le m+f, 1\le j \le m+f},
\]
where $$
b_{i,j}=\begin{cases}
			\frac{1}{2}r, & \text{if there are $r$ half arrows from $i$ to $j$}\\
            -\frac{1}{2}r, & \text{if there are $r$ half arrows from $j$ to $i$}.
		 \end{cases}
$$

\end{defn}

 Note that here we include the half-arrows between frozen vertices, compare with Remark \ref{rmk: arrows frozen}. If both $i$ and $j$ are frozen then $b_{i,j}$ is a half-integer or an integer.

\begin{remark}
    Observe that $\Hmatrix$ is always a skew symmetric square matrix. Note also that we recover the matrix $\til$ by only considering the first $m$ columns of $\Hmatrix$.
\end{remark}

\section{Main theorem and proof}

%\subsection{Set up}
%We fix some notations. We consider $\X$ as a braid variety associated to a pair $(u,\beta)\in S_{n}\times Br_{n}^{+}$.
%and assume that the associated quiver $Q_{u,\beta}$ has $m$ mutable vertices and $f$ frozen vertices. 
%Now suppose that $Q_{u,\beta}$ be an ice quiver, with $|Q_0| = m+f$, in such a way that there are $m$ mutable vertices and $f$ frozen ones. Note also that $G_{u,\beta}$ has $n-1$ boundary regions.

%{\color{red} Do we need this section?}

\subsection{Main theorem}\label{sec: rank}

In this subsection, we define the $\bhat$ matrix, which is an invertible extension of $\til$.
%which extends $\til$ matrix which was previously defined in Section \ref{sec: 3d}, and show that $\det(\bhat)=\pm1$.
%Recall that $C_{\m_{i}}$ denotes the soap film corresponding to the $i$th mutable vertex in $\Q$, and $C_{\f_{j}}$  denote the soap film corresponding to the $j$th frozen vertex in $\Q$.
We recall the indexing convention in Remark \ref{rmk: ordering rmk}. %Throughout this section, assume that $u$ is a subword of $\bm\beta$.

\begin{defn}\label{defn: boundary}
Define the \newword{boundary map} $\p_{u,\bm\beta}: C\to \Z^{n-1}$ by 
\[
\p_{u,\bm\beta}(C_{i})=\underset{\text{$j$: $C_i$ covers $j$th boundary region of $G_{u,\bm\beta}$}}{\sum}\alpha_j
\]
and the \newword{boundary correction matrix}
\[
D_{i,j}^{u,\bm\beta}=\frac{1}{2}\p_{u,\bm\beta}(C_i)\cdot \p_{u,\bm\beta}(C_j)
\]
defined by the usual dot product. We set $D^{u,\bm\beta}:=(D_{i,j}^{u,\beta})_{1\le i,j\le m+f}$.
    
\end{defn}

\begin{remark}
Throughout all examples where we are given with the choice of $u$ and $\bm\beta$ and have no ambiguity, we will often abbreviate $\p_{u,\bm\beta}$ by $\p$.
\end{remark}

\begin{defn}\label{defn: bhat}
Here and throughout, we put 
$\bhat:=\Hmatrix+\D=(\widehat{b}_{i,j})_{1\le i,j\le m+f}$.
\end{defn}

\begin{remark}\label{rem: D is sym}
    Note that $D_{ij}=D_{ji}$, so that $D$ is a symmetric matrix of size $(m+f)\times (m+f)$.
\end{remark}

%{\color{red} Here $C_,C_j$ are arbitrary cycles, so that $F$ is square $(m+f)\times (m+f)$ matrix. Note that if $C_i$ or $C_j$ is mutable then $F_{ij}=0$. Also note that $F_{ij}=F_{ji}$, so that $F$ is a symmetric matrix.}

%Using all this, we define $\bhat$ matrix as follows. 
%Assume that we used a notation A to denote a half arrow matrix.
%\begin{defn}
%Let $H_{u,\beta}$ denotes an half arrow matrix as in section \ref{sec: 3d} and $\p C$ denotes the boundary correction matrix. Then define 
%Set $P=A+F$. 
 %   Then define
  %  \[\bhat=\begin{bmatrix}
   % B &  C\\
    %-C^T & P
%\end{bmatrix}
%\]
%\end{defn}
Note that
\[
   \Hmatrix=\leftidx{_{f}^{\phantom{m}\llap{$\scriptstyle m$}}}{\left( \begin{array}{c:c}
    \smash{\overset{m}{B}} & \mathclap{\smash{\overset{f}{E}}} \\
    \hdashline
    -E^T & N
  \end{array} \right)}{}
\]

and \[
   \D=\leftidx{_{f}^{\phantom{m}\llap{$\scriptstyle m$}}}{\left( \begin{array}{c:c}
    \smash{\overset{m}{0}} & \mathclap{\smash{\overset{f}{0}}} \\
    \hdashline
    0 & N'
  \end{array} \right)}{},
\]
where $N,N'$ are some $f\times f$ matrices. Thus we have

\begin{equation}
 \bhat=\leftidx{_{f}^{\phantom{m}\llap{$\scriptstyle m$}}}{\left( \begin{array}{c:c}
    \smash{\overset{m}{B}} & \mathclap{\smash{\overset{f}{E}}} \\
    \hdashline
    -E^T & N+N'
  \end{array} \right)}{}.
\end{equation}
%This is seen to be extended from the matrix $\til$.

\begin{remark}\label{rmk: ordering rmk}
    Note that $\D$ and $\bhat$ follow the ordering convention in Definition \ref{defn: H matrix}.
\end{remark}
%\begin{prop}
%The matrix $\bhat$ is an integer matrix. {\color{red} Combine with next theorem?}
%\end{prop}

%\begin{proof}
%By equation (\ref{eq: b tilde}), it is clear that $\til$ matrix is an integer matrix. Thus it is enough to show that a square matrix $Q+Q'$ as stated in (\ref{eq: bhat defn}) is an integer matrix. 
%\end{proof}

\begin{defn}
We define two matrices $L=(\ell_{yx})$ and $R=(r_{yx})$ as follows:
$$
l_{yx}=\begin{cases}
    1 & y=x\\
    \widehat{b}_{y,m+1} & x=m+1\ \mathrm{and}\ y\neq m+1\\
    0 & \mathrm{otherwise}
\end{cases}\quad \text{and} \quad
r_{yx}=\begin{cases}
    1 & y=x\\
    \widehat{b}_{m+1,x} & y=m+1\ \mathrm{and}\ x\neq m+1\\
    0 & \mathrm{otherwise}.
\end{cases}
$$
Define $Z^{u,\bm\beta}:=\bhat R$.
\end{defn}

%Later it will be useful to rephrase the proof of Theorem \ref{thm: main thm} as follows. 

\begin{defn}\label{defn: Z hat}
Define another matrix $\widehat{Z}^{u,\bm\beta}$ obtained from $Z^{u,\bm\beta}$ by replacing all $\widehat{b}_{y,m+1}$ by $0$. 
\end{defn}
Observe that $\widehat{Z}^{u,\bm\beta}$ agrees with $\widehat{B}^{u,\bm\beta'}$ up to adding $(m+1)$-st row and column with $(-1)$ on diagonal and zeroes elsewhere and conjugation by a permutation matrix.  

\begin{lem}
\label{lem: L and R}
We have $L\widehat{B^{u,\bm\beta}}R=\widehat{Z}^{u,\bm\beta}$.
\end{lem}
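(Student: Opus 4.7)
The plan is to compute $L\widehat{B^{u,\bm\beta}}R$ entry by entry and show that the result matches $Z_1$ by exploiting that $\widehat{B}^{u,\bm\beta}_{m+1,m+1}=-1$. The matrices $L$ and $R$ are designed precisely so that the first one, acting on the left, performs a row operation that kills entries in column $m+1$ (outside the $(m+1,m+1)$ position), while the second, acting on the right, performs a column operation that kills entries in row $m+1$; the simultaneous application is exactly what recovers $Z_1$.

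First I would unpack how $L$ and $R$ act. Writing $A:=\widehat{B^{u,\bm\beta}}$, from the definitions,
\[
(LA)_{y,x}=A_{y,x}+\widehat{b}_{y,m+1}A_{m+1,x}\text{ for }y\neq m+1,\qquad (LA)_{m+1,x}=A_{m+1,x},
\]
\[
(AR)_{y,x}=A_{y,x}+A_{y,m+1}\widehat{b}_{m+1,x}\text{ for }x\neq m+1,\qquad (AR)_{y,m+1}=A_{y,m+1}.
\]
So $L$ adds $\widehat{b}_{y,m+1}$ times row $m+1$ to row $y$ (for $y\neq m+1$), and $R$ adds $\widehat{b}_{m+1,x}$ times column $m+1$ to column $x$ (for $x\neq m+1$).

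Next I would compute $AR$. For $y,x\neq m+1$, one gets $(AR)_{y,x}=\widehat{b}_{y,x}+\widehat{b}_{y,m+1}\widehat{b}_{m+1,x}$, which by Equation~\eqref{eq: column operation} equals $\widehat{b'}_{y',x'}$, i.e.\ the corresponding entry of $\widehat{B^{u,\bm\beta'}}$. For $x=m+1$ the column is unchanged: $(AR)_{y,m+1}=\widehat{b}_{y,m+1}$. For the row $y=m+1$ and $x\neq m+1$, since $A_{m+1,m+1}=-1$, we get $(AR)_{m+1,x}=\widehat{b}_{m+1,x}+(-1)\widehat{b}_{m+1,x}=0$, and $(AR)_{m+1,m+1}=-1$. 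So after applying $R$ on the right, the $(m+1)$-st row has already been killed outside the diagonal.

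Finally I would apply $L$ to $AR$. For $y\neq m+1$ and $x\neq m+1$, $(L(AR))_{y,x}=(AR)_{y,x}+\widehat{b}_{y,m+1}(AR)_{m+1,x}=(AR)_{y,x}+0=\widehat{b'}_{y',x'}$, so off-$(m+1)$-block entries agree with those of $\widehat{B^{u,\bm\beta'}}$. For $y\neq m+1$ and $x=m+1$, $(L(AR))_{y,m+1}=\widehat{b}_{y,m+1}+\widehat{b}_{y,m+1}\cdot(-1)=0$, which is precisely the replacement of $\widehat{b}_{y,m+1}$ by $0$ that defines $Z_1$ from $Z$. The $(m+1)$-st row of $LAR$ is unchanged from that of $AR$, hence is zero outside position $(m+1,m+1)$, where the entry is $-1$. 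This matches the description of $Z_1$ as $\widehat{B^{u,\bm\beta'}}$ padded by an extra $(m+1)$-st row and column with $-1$ on the diagonal and zeros elsewhere, completing the proof.

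There is no real obstacle here: the lemma is a direct verification once one tracks the effect of $L$ and $R$, and the only subtlety is the sign contributed by $A_{m+1,m+1}=-1$, which is exactly what cancels the entries $\widehat{b}_{y,m+1}$ and $\widehat{b}_{m+1,x}$ to produce the zero block structure of $Z_1$.
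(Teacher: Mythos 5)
Your proof is correct and follows essentially the same route as the paper: right multiplication by $R$ realizes the column operations producing $Z$ (with the $(m+1)$-st row zeroed out off the diagonal via $\widehat{b}_{m+1,m+1}=-1$), and left multiplication by $L$ then clears the $(m+1)$-st column to give $Z_1$. You merely carry out the entry-by-entry verification that the paper states more tersely.
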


\begin{proof}
%The column operations on $\bhat$ and the equation \eqref{eq: column operation} can be interpreted as right multiplication by $R$, so
%$Z=\bhat R$. 
Recall that $Z^{u,\bm\beta}=\widehat{B^{u,\bm\beta}}R$.
We can get from $Z^{u,\bm\beta}$ to $\widehat{Z}^{u,\bm\beta}$ by adding to $y$-th row the $(m+1)$-st row multiplied by $\widehat{b}_{y,m+1}$, this is equivalent to left multiplication by $L$, so $LZ^{u,\bm\beta}=\widehat{Z}^{u,\bm\beta}$ and the result follows. 
\end{proof}

The following is the main result of the paper. We prove it modulo a series of technical lemmas which are postponed to Section \ref{subsec: auxiliary lemmas A} and Section \ref{subsec: auxiliary lemmas B}.

\begin{thm}\label{thm: main thm}
For any $u$ and a specific braid representative $\bm\beta$, the matrix ${\bhat}$ has integer coefficients and $\det(\bhat)=(-1)^{m+f}$.
\end{thm}
\begin{proof}
Let us fix the specific braid word $\bm{\beta}$ of $\beta$.
We proceed on induction on length of $\bm\beta$. If $\bm{\beta}$ is a reduced word for $u$, then $Q_{u,\bm\beta}$ is empty and there is nothing to prove. Given $u\in S_n$ and $\bm{\beta}$, we have either two cases: $1\not\in J_{u,{\bm\beta}}$ or $1\in J_{u,{\bm\beta}}$. 
%Let us denote them by \textbf{Case A} and \textbf{Case B}.
%either ({\bf Case A}) $\beta=\sigma_i\beta'$ and $u=s_iu'$ where $u'$ is a subword of $\beta'$ or ({\bf Case B}) $\beta=\sigma_{i}\beta^{'}$ and $u$ is a subword of $\beta'$. 
Let us first consider the first case, the \textbf{Case A}. The 3D plabic graph for $(u,{\bm \beta})$ is obtained from $G_{u',\bm\beta'}$
 by adding a crossing on the left. In particular, $G_{u,\bm\beta}$ and $G_{u',\bm\beta'}$ have the same number of bridges, and the same number of mutable and frozen variables.
By the induction hypothesis, we have $\det(\widehat{B^{u',\bm\beta'}})=\pm1$.
We have $H_{u,\bm\beta}=H_{u,\bm\beta'}$ since the quivers $Q_{u,\bm\beta}$ and $Q_{u',\bm{\beta'}}$ are the same, and we can identify respective frozen and mutable vertices. Now from Corollary \ref{cor: main cor for case a}, one has $D^{u,\bm\beta}=D^{u',\bm\beta'}$ and thus $\bhat=H^{u,\bm\beta}+D^{u,\bm\beta}=\widehat{B^{u',\bm{\beta'}}}$.

Therefore, in this case, we conclude that $\bhat$ is an integer matrix and $\det(\widehat{B^{u,\bm\beta}})=\det(\widehat{B^{u',\bm{\beta'}}})=(-1)^{m+f}$.
 
 Next, we consider the latter case, \textbf{Case B}. As in Remark \ref{rmk: leftmost frozen}, $1\in J_{u,\underline{\beta}}$ corresponds to the leftmost frozen bridge $d_L$. Comparing the quivers $Q_{u,\bm\beta}$ and $Q_{u',\bm{\beta'}}$, we notice from Example \ref{ex: leftmost frozen soap} that there is an extra frozen vertex $m+1$ in $Q_{u,\bm\beta}$ in correspondence to $d_L$.
 %Suppose that the leftmost letter of $\underline{\beta}$ is $\sigma_i$. Then
 %Note that $\partial (C_{m+1})=e_i$. 
 Therefore we have 
 $$
 H^{u,\bm\beta}_{m+1,m+1}=0,D^{u,\bm\beta}_{m+1,m+1}=-1,
 \widehat{B}^{u,\bm\beta}_{m+1,m+1}=-1.
 $$
Let $y',x'$ be two vertices in $Q_{u,\bm{\beta'}}$ corresponding to $y,x$ in $Q_{u,\bm\beta}$.
 We want to relate the matrices $\widehat{B^{u,\bm\beta}}=(\widehat{b}_{y,x})$ and $\widehat{B^{u,\bm\beta'}}=(\widehat{b'}_{y',x'})$ using row operations. 
 %More precisely, we add $(m+1)$-st column in  $\widehat{B^{u,\bm\beta}}$ multiplied by $\widehat{b}_{m+1,x}$ to $x$-th column, for all $x\neq m+1$, let us denote the resulting matrix by $Z$. 
 Lemma  \ref{lem: final lem} implies that for $x,y\neq m+1$
 \begin{equation}
 \label{eq: column operation}
\widehat{b'}_{y',x'}=\widehat{b}_{y,x}+\widehat{b}_{m+1,x}\widehat{b}_{y,m+1}=z_{y,x}.
 \end{equation}
 Thus the matrix $Z^{u,\bm\beta}$ is identical to $\widehat{B^{u,\bm{\beta'}}}$, except for the $(m+1)$-st row and $(m+1)$-st column and some reordering of rows and columns (see Remark \ref{rem: reordering}).
 Also, by construction $Z_{m+1,x}=0$ for $x\neq m+1$ and $Z_{m+1,m+1}=-1$.   %We can illustrate this as follows:
 $$
\widehat{B}^{u,\bm\beta}=\left(
\begin{matrix}
      & \vdots & & \vdots & \\
\cdots & -1 & \cdots & \widehat{b}_{m+1,x} & \cdots\\   
      & \vdots & & \vdots & \\
\cdots & \widehat{b}_{y,m+1} & \cdots & \widehat{b}_{y,x} & \cdots\\
     & \vdots & & \vdots & \\
\end{matrix}
 \right)\quad \text{and} \quad 
Z=\left(
\begin{matrix}
      & \vdots & & \vdots & \\
\cdots & -1 & \cdots & 0 & \cdots\\   
      & \vdots & & \vdots & \\
\cdots & \widehat{b}_{y,m+1} & \cdots & \widehat{b'}_{y',x'} & \cdots\\
     & \vdots & & \vdots & \\
\end{matrix}
 \right).
 $$
We conclude that 
 $$
 \det \widehat{B^{u,\bm\beta}}=\det Z^{u,\bm\beta}=-\det \widehat{B^{u,\bm{\beta'}}}=-(-1)^{m+f-1}=(-1)^{m+f}.
 $$
 For the second equality, we note that simultaneous reordering of rows and columns is equivalent to conjugation by a permutation matrix, and does not change determinant.  
 
 Finally, to prove that $\bhat$ has integer coefficients, we use the same inductive argument and observe that both $\widehat{b}_{m+1,x}$ and $\widehat{b}_{y,m+1}$ are always integers by Lemma \ref{lem: b m+1} below. Since $Z^{u,\bm\beta}$ has integer coefficients by assumption of induction, $\bhat$ has integer coefficients as well. 
 %%The cycles for mutable vertices do not change, but the cycles for the frozen ones change according to the propagation rule across the leftmost bridge. %Now we will prove by induction on the length of $\beta'$. 
 %%By induction hypothesis, we have $\det(\widehat{B^{u,\beta'}})=\pm 1$. 
 %%Let us denote $x,y_i$ be frozens in $Q_{u,\beta}$  {\color{red}Discuss!Indexing off etc} and $x',y_i^{'}$ be the corresponding frozen vertices in $Q_{u',\beta'}$. Without loss of generality, assume that the second column of $\bhat$ is of the form $\begin{bmatrix}
%%    \widehat{b}_{1,x}, & \widehat{b}_{y_1,x}, & \widehat{b}_{y_2,x}, & \cdots
%%\end{bmatrix}^{T}$.
%Observe that the consequence of Lemma \ref{lem: final lem} is equivalent to applying a column operation using the first column and the second column of
%%\[
%% \bhat=\begin{bmatrix}
%%    -1 &  \widehat{b}_{1,x} & \ldots\\
%%    \widehat{b}_{y_1,1} & \widehat{b}_{y_1,x} & \ldots \\
%%     \widehat{b}_{y_2,1} & \widehat{b}_{y_2,x} & \ldots \\
%%    \vdots & \vdots & \ddots\\
%%\end{bmatrix}.
%% \]
%% After applying the analogous column operation to any frozen vertex $x$ in $\Q$ whenever $\widehat{b}_{1,x}$ is nonzero in a similar way, $\bhat$ is transformed to 
%% $\begin{bmatrix}
%%    -1 &  0 \\
%%    0 & \widehat{B^{u,\beta'}} \\
%%\end{bmatrix}$.
%% Therefore, we conclude that
%% $\det(\bhat)=%%(-1)\det(\widehat{B^{u',\beta'}})=\pm1$. 
\end{proof}

\begin{remark}\label{rmk: really full rank}
Theorem \ref{thm: main thm} implies that the integer span of columns of $\til$ is $\mathbb{Z}^m$,
which was previously proven in \cite{CGGLSS22,GLSBS22}.  In terminology of \cite{LS22}, the quiver $Q_{u,\bm\beta}$ has {\it really full rank}.
\end{remark}

We postpone proving key lemmas used in the proof of Theorem \ref{thm: main thm} to the subsequent subsections. Before we move on, we will see two examples that illustrate Theorem \ref{thm: main thm}.

\begin{exam}\label{ex: ex computation}
    Let us take $u=s_2$ and $\beta=\sigma_3\sigma_2\sigma_1\textcolor{blue}{\sigma_2}\sigma_3$ as an example. %{\color{red} Add a picture here or somewhere else} 
    \begin{figure*}[ht!]
    \centering
     \subfloat[]{
\begin{tikzpicture}[scale=1]
%six strands
\draw [thick](-6.5,0) -- (-2,0);
\draw [thick](-6.5,1) -- (-2,1);
\draw [thick](-6.5,2) -- (-2,2);
\draw [thick](-6.5,3) -- (-2,3);
%\draw [thick](-6.5,4) -- (0,4);
%\draw [thick](-6.5,5) -- (0,5);

%1st bridge
\draw [ultra thick] (-6,2.95) to (-6,2.05);
\node[scale =0.5, circle, draw=black, fill=white] (W) at (-6,2.95) {};
\node[scale =0.5, circle, draw=black, fill=black] (B) at (-6,2.05) {};
\node[color=black] (W) at (-6,3.2) {1};
%its propagation
%\node[color=frenchrose] (1) at (-6.25,4.5) {1};
%\node[color=frenchrose] (2,10) at (-6.25,3.5) {2,10};
\node[color=frenchrose] (1) at (-6.25,2.5) {1};
\node[color=frenchrose] (1) at (-6.1,1.5) {2,4};
\node[color=frenchrose] (1) at (-6.25,0.5) {3};

%2st bridge
\draw [ultra thick] (-5.5,1.95) to (-5.5,1.05);
\node[scale =0.5, circle, draw=black, fill=white] (W) at (-5.5,1.95) {};
\node[scale =0.5, circle, draw=black, fill=black] (B) at (-5.5,1.05) {};
\node[color=black] (W) at (-5.5,2.2) {2};
%3rd bridge
\draw [ultra thick] (-5,0.95) to (-5,0.05);
\node[scale =0.5, circle, draw=black, fill=white] (W) at (-5,0.95) {};
\node[scale =0.5, circle, draw=black, fill=black] (B) at (-5,0.05) {};\
\node[color=black] (W) at (-5,1.2) {3};
%\node[color=frenchrose] (1) at (-5.5,0.5) {3};

%4th crossing
%\draw [ultra thick] (-4.5,1.95) to (-4.5,1.05);
%\node[scale =0.5, circle, draw=black, fill=white] (W) at (-4.5,1.95) {};
%\node[scale =0.5, circle, draw=black, fill=black] (B) at (-4.5,1.05) {};
\draw[color=white, line width=5] (-4,2) to (-3,2);
\draw[color=white, line width=5] (-4,1) to (-3,1);
\draw[color=blue, thick] (-4,2) to [out=0,in=180] (-3,1);
\draw[color=white, line width=5] (-4,1) to [out=0,in=180] (-3,2);
\draw[color=blue, thick] (-4,1) to [out=0,in=180] (-3,2);
%\node[color=frenchrose] (1) at (-5.5,1.5) {2,5};
%\node[color=frenchrose] (1) at (-3,1.5) {2,5};
%\node[color=frenchrose] (1) at (-4,1.5) {2,5};
\node[color=frenchrose] (1) at (-4.5,1.5) {4};

%5th bridge
\draw [ultra thick] (-2.5,2.95) to (-2.5,2.05);
\node[scale =0.5, circle, draw=black, fill=white] (W) at (-2.5,2.95) {};
\node[scale =0.5, circle, draw=black, fill=black] (B) at (-2.5,2.05) {};
%\node[color=frenchrose] (1) at (-3,2.5) {4};
%\node[color=frenchrose] (1) at (-5,2.5) {4};
\node[color=frenchrose] (1) at (-4,2.5) {4};
,3.05);
\node[color=black] (W) at (-2.5,3.2) {4};
%\node[scale =0.5, circle, draw=black, fill=white] (W) at (-2.5,3.95) {};
%\node[scale =0.5, circle, draw=black, fill=black] (B) at (-2.5,3.05) {};
\end{tikzpicture}}
    \qquad
\subfloat[]{
\begin{tikzpicture}[squarednode/.style={rectangle, draw=blue!60, fill=blue!5, very thick, minimum size=5mm},
]
\node[draw, squarednode] (1) at (0,3){1};
\node[draw, squarednode] (2) at (0.5,2){2};
\node[draw, squarednode] (3) at (1,1){3};
%\node[draw, circle,red] (4) at (1.5,2){4};
\node[draw, squarednode] (4) at (2,3){4};
%\node[draw, circle, green] (6) at (2.5,4){6};
%\node[draw, circle, green] (7) at (3,3){7};
%\node[draw, circle, green] (9) at (4,2){9};
%\node[draw, circle, red] (10) at (4.5,5){10};
%\node[draw, circle, green] (13) at (6,5){13};
\draw[->,blue!75] (2) to (1);
\draw[->,blue!75] (1) to (4);
\draw[->,blue!75] (3) to (2);
\draw[->,blue!75] (4) to (3);
%\draw[->] (4) to (9);
%\draw[->] (9) to (5);
%\draw[->] (7) to (6);
%\draw[->] (9) to (7);
%\draw[->] (7) to (13);
%\draw[->] (13) to (9);  
\end{tikzpicture}
}
\end{figure*}
    Note that all bridges are frozen bridges, and the \textcolor{red}{red} arrows in (B) indicates the half arrows between the frozen vertices.
    We consider soap films $C_1,C_2,C_3,C_4$ that correspond to a braid letter $\sigma_3,\sigma_2,\sigma_1$ and $\sigma_3$. We have $\p(C_1)=(0,0,1)$, $\p(C_2)=(0,1,0)$, $\p(C_3)=(1,0,0)$ and $\p(C_4)=(0,1,0)$. Then we can compute that $\p(C_1)\cdot\p(C_2)=\frac{1}{2}$, $\p(C_1)\cdot \p(C_3)=0$, $\p(C_1)\cdot \p(C_4)=\frac{1}{2}$. 
Continuing in this way, we obtain the boundary correction matrix 
    \[\D=\begin{bmatrix}
    -1 &  \frac{1}{2}  & 0 & \frac{1}{2}\\
    \frac{1}{2} & -1 & \frac{1}{2} & -1\\
    0 & \frac{1}{2} & -1 & \frac{1}{2}\\
    \frac{1}{2} & -1 & \frac{1}{2} & -1
\end{bmatrix}.
\]
Since half arrow matrix is equal to \[\Hmatrix=\begin{bmatrix}
    0 &  -\frac{1}{2}  & 0 & \frac{1}{2}\\
   \frac{1}{2} & 0 & -\frac{1}{2} & 0\\
    0 & \frac{1}{2} & 0 & -\frac{1}{2}\\
    -\frac{1}{2} & 0 & \frac{1}{2} & 0
\end{bmatrix},
\]
one has \[\bhat=\Hmatrix+\D=
\begin{bmatrix}
    -1 &  0  & 0 & 1\\
    1 & -1 & 0 & -1\\
    0 & 1 & -1 & 0\\
    0 & -1 & 1 & -1
\end{bmatrix}.
\]
Then, we see by the computation that the determinant of matrix 
is indeed $1$.
\end{exam}

\begin{exam}\label{ex: running ex computation}
    Let us revisit our running  Example \ref{ex: running ex intro}. One can read off that $\p(C_1)=\p(C_2)=\p(C_3)=\p(C_4)=(0,0,0,0,0)$,
    $\p(C_5)=(0,0,0,0,1)$, $\p(C_6)=(0,0,0,1,0)$, $\p(C_7)=(0,0,1,0,0)$, $\p(C_8)=(0,1,0,0,0)$, $\p (C_9)=(1,0,0,0,0)$ and $\p (C_{10})=(0,0,0,1,0)$. Then the boundary correction matrix is obtained by
    \[\D=\begin{bmatrix}
 0 \quad  0\\
    0 \quad N'
\end{bmatrix},
\]
where \[
N'=\begin{bmatrix}
 -1 &  \frac{1}{2}  & 0 & 0 & 0 & \frac{1}{2}\\
\frac{1}{2} & -1 & \frac{1}{2} & 0 & 0 & -1\\
 0 & \frac{1}{2} & -1 & \frac{1}{2} & 0 & \frac{1}{2}\\
0 &  0 & \frac{1}{2} & -1 & \frac{1}{2} & 0\\
0 & 0 & 0 & \frac{1}{2} & -1 & 0\\
 \frac{1}{2} & -1 & \frac{1}{2} & 0 & 0 & -1
\end{bmatrix}.
\]

Since the half arrow matrix is computed as \[\Hmatrix=\begin{bmatrix}
0 & -1 & 0 & 0 & 0 & -1 & 1 & 0 & 0 & 0\\
1 & 0 & -1 & 1 & 0 & 0 & -1 & 1 & 0 & 0\\
0 & 1 & 0 & -1 & 0 & 0 & 0 & -1 & 1 & 0\\
0 & -1 & 1 & 0 & 0 & 0 & 0 & 0 & 0 & 0\\
0 & 0 & 0 & 0 & 0 & -\frac{1}{2} & 0 & 0 & 0 & \frac{1}{2}\\
1 & 0 & 0 & 0 & \frac{1}{2} & 0 & -\frac{1}{2}& 0 & 0 & 0\\
-1 & 1 & 0 & 0 & 0 & \frac{1}{2} & 0 & -\frac{1}{2} & 0 & -\frac{1}{2}\\
0 & -1 & 1 & 0 & 0 & 0 & \frac{1}{2} & 0 & -\frac{1}{2} & 0\\
0 & 0 & -1 & 0 & 0 & 0 & 0 & \frac{1}{2} & 0 & 0\\
0 & 0 & 0 & 0 & -\frac{1}{2} & 0 & \frac{1}{2} & 0 & 0 & 0\\
\end{bmatrix},
\]
one has 
\[\bhat=\begin{bmatrix}
0 & -1 & 0 & 0 & 0 & -1 & 1 & 0 & 0 & 0\\
1 & 0 & -1 & 1 & 0 & 0 & -1 & 1 & 0 & 0\\
0 & 1 & 0 & -1 & 0 & 0 & 0 & -1 & 1 & 0\\
0 & -1 & 1 & 0 & 0 & 0 & 0 & 0 & 0 & 0\\
0 & 0 & 0 & 0 & -1 & 0 & 0 & 0 & 0 & 1\\
1 & 0 & 0 & 0 & 1 & -1 & 0 & 0 & 0 & -1\\
-1 & 1 & 0 & 0 & 0 & 1 & -1 & 0 & 0 & 0\\
0 & -1 & 1 & 0 & 0 & 0 & 1 & -1 & 0 & 0\\
0 & 0 & -1 & 0 & 0 & 0 & 0 & 1 & -1 & 0\\
0 & 0 & 0 & 0 & 0 & -1 & 1 & 0 & 0 & -1\\
\end{bmatrix},
\]
One can verify using Sage %\cite{sage} 
that $\det(\bhat)=1$.
\end{exam}

\begin{remark}
Note that Example \ref{ex: ex computation} and Example \ref{ex: running ex computation} are the cases when the number of frozen variables is unusually large, that is, the number of frozen bridges is bigger than the number of boundary regions in $G_{u,\bm\beta}$.
\end{remark}

\subsection{Auxiliary lemmas for Case $A$}\label{subsec: auxiliary lemmas A}

In this subsection, we assume the \textbf{Case A} situation in Theorem \ref{thm: main thm}. We assume that $1\not\in J_{u,\bm\beta}$. In other words, suppose that $\bm\beta=s_i\bm\beta'$ and $u=s_iu'$ where $u'$ is a subword of $\bm\beta'$. We introduce three auxiliary lemmas to prove Corollary \ref{cor: main cor for case a}, which was the key lemma when proving the \textbf{Case A} part of Theorem \ref{thm: main thm}.\\
%In this subsection, we introduce three auxiliary lemmas regarding \textbf{Case A} in Theorem \ref{thm: main thm}.
%, and the subsequent four lemmas are about \textbf{Case B}. 
%We define a transformation $R_i$ and show that it is a reflection. Then the Lemma \ref{lem: inner product under R} and Lemma \ref{lem: case 1}, which were used in the proof of Theorem \ref{thm: main thm} follow from Lemma \ref{lem: reflection R_i}.\\ %We will show Lemma \ref{lem: compare H} and Lemma \ref{lem: compare partial} in order to prove Lemma \ref{lem: final lem}.\\

We introduce a transformation $R_i:\Z^{n-1}\to \Z^{n-1}$ defined by 
$$
R_i(a_1,\ldots,a_n)=(a_1,\ldots,a_{i-1},-a_{i}+a_{i-1}+a_{i+1},a_{i+1},\ldots,a_{n-1})
$$
for $1\le i\le n-1$.

\begin{lem}\label{lem: reflection R_i}
For $a\in \Z^{n-1}$, we have $R_i(a)=a+2(a,e_{i})e_i$, where $e_i$ denotes the standard basis vector. Thus $R_i$ is a reflection in $e_i$ with respect to the symmetric bilinear form $(-,-)$.
\end{lem}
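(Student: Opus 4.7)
The plan is to verify the identity by a one-line direct computation from the definition of the form, and then extract the reflection interpretation from the fact that $(e_i,e_i)=-1$.

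First I would compute $(a,e_i)$ using Definition \ref{defn: dot}. Since $e_i$ has a single nonzero entry (equal to $1$ in position $i$), only the $i$-th column of $M$ contributes, giving
\[
(a,e_i)=\sum_{j}a_j M_{j,i}=\tfrac{1}{2}a_{i-1}-a_i+\tfrac{1}{2}a_{i+1},
\]
with the convention $a_0=a_n=0$ for the boundary cases $i=1$ and $i=n-1$. Multiplying by $2e_i$, the vector $2(a,e_i)e_i$ has $i$-th coordinate equal to $a_{i-1}-2a_i+a_{i+1}$ and all other coordinates zero. Adding this to $a$ leaves all entries of $a$ untouched except the $i$-th, which becomes
\[
a_i+(a_{i-1}-2a_i+a_{i+1})=a_{i-1}-a_i+a_{i+1},
\]
which is exactly the $i$-th entry of $R_i(a)$ as defined. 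Hence $R_i(a)=a+2(a,e_i)e_i$.

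For the second assertion, I would simply note that $(e_i,e_i)=M_{i,i}=-1$, so the formula above can be rewritten as
\[
R_i(a)=a-2\,\frac{(a,e_i)}{(e_i,e_i)}\,e_i,
\]
which is the standard formula for the reflection in the hyperplane $(-,e_i)^{\perp}$ with respect to the symmetric bilinear form $(-,-)$ (this uses Lemma \ref{lem: prop of dot}). There is no real obstacle here; the only thing to be careful about is the edge cases $i=1$ and $i=n-1$, which are handled uniformly by the convention $a_0=a_n=0$ and the fact that the corresponding off-diagonal entries of $M$ simply do not exist.
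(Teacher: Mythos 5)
Your proof is correct and follows essentially the same route as the paper: compute $(a,e_i)$ directly from the matrix $M$, observe that only the $i$-th coordinate changes, and use $(e_i,e_i)=-1$ for the reflection interpretation. In fact your formula $(a,e_i)=\tfrac{1}{2}a_{i-1}-a_i+\tfrac{1}{2}a_{i+1}$ is the correct one (the paper's displayed computation omits the factors of $\tfrac{1}{2}$ on the off-diagonal terms, though its conclusion is right), and your explicit handling of the boundary cases $i=1,\,n-1$ is a small but welcome addition.
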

\begin{proof}
Assume that $a=(a_1,\ldots, a_{n-1})$. Then the direct computation gives us that
\[
(a,e_i)=-a_i+a_{i-1}+a_{i+1},
\]
and thus $a+2(a,e_i)e_i=R_i(a)$.
\end{proof}
\begin{lem}\label{lem: inner product under R}
For $a,b\in \mathbb{Z}^{n-1}$, we have
    $(R_i(a),R_i(b))=(a,b)$.
\end{lem}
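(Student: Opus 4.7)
The plan is to derive the invariance directly from the formula in Lemma \ref{lem: reflection R_i}, which expresses $R_i$ as the standard reflection with respect to the form $(-,-)$ along the vector $e_i$. Since $(-,-)$ is already known to be symmetric and bilinear by Lemma \ref{lem: prop of dot}, we may expand $(R_i(a),R_i(b))$ term by term and collect.

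More concretely, I would begin by writing
\[
(R_i(a), R_i(b)) = \bigl(a + 2(a,e_i)e_i,\; b + 2(b,e_i)e_i\bigr),
\]
and then use bilinearity to split this into four summands: $(a,b)$, $2(b,e_i)(a,e_i)$, $2(a,e_i)(e_i,b)$, and $4(a,e_i)(b,e_i)(e_i,e_i)$. By symmetry, the two middle terms combine to $4(a,e_i)(b,e_i)$. Then the key numerical input is $(e_i,e_i) = M_{ii} = -1$, so the final term equals $-4(a,e_i)(b,e_i)$, which exactly cancels the middle terms and leaves $(a,b)$.

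This is really just the standard linear-algebra fact that a reflection $v \mapsto v - 2\tfrac{(v,\alpha)}{(\alpha,\alpha)}\alpha$ preserves any symmetric bilinear form $(-,-)$ with $(\alpha,\alpha)\neq 0$. The only thing one must check is the sign convention: since $(e_i,e_i) = -1$ rather than $+1$, the reflection formula from Lemma \ref{lem: reflection R_i} has a $+2$ rather than a $-2$, but the computation above goes through unchanged. There is no serious obstacle here; the only minor point to watch is sign bookkeeping, and making sure one invokes symmetry of $(-,-)$ to combine the two cross terms before substituting $(e_i,e_i)=-1$.
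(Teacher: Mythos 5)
Your proposal is correct and matches the paper's proof essentially verbatim: both expand $(a+2(a,e_i)e_i,\,b+2(b,e_i)e_i)$ by bilinearity, combine the cross terms via symmetry into $4(a,e_i)(b,e_i)$, and cancel them against $4(a,e_i)(b,e_i)(e_i,e_i)$ using $(e_i,e_i)=-1$. No differences worth noting.
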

\begin{proof}
From Lemma \ref{lem: reflection R_i}, we have 
\begin{align*}
(R_i(a),R_i(b)) & =(a+2(a,e_{i})e_i,b+2(b,e_{i})e_i)\\
       & =(a,b)+4(b,e_{i})(a,e_{i})+4(a,e_{i})(b,e_{i})(e_{i},e_{i})\\
       & =(a,b),\ \text{as}\ (e_{i},e_{i})=-1.
\end{align*}
\end{proof}
%In Case 1 we have $H_{u,\beta}=H_{u,\beta'}$ because all arrows are the same, in particular the quivers $Q_{u,\beta}$ and $Q_{u,\beta'}$ are the same, and we can identify respective frozen and mutable vertices.
Now we need to understand how the boundary map $\partial_{u,\bm\beta}$ differ from $\partial_{u',\bm{\beta'}}$.
%Let $\partial_{u,\bm\beta}$ and $\partial_{u',\bm{\beta'}}$ be the two boundary maps.

\begin{lem}\label{lem: case 1}
Let $C_{j}$ be the soap film of a vertex $j$ where $1\le j\le n$. Then 
$$
\partial_{u,\bm\beta}(C_{j})=R_i(\partial_{u',\bm{\beta'}}(C_{j})).
$$
\end{lem}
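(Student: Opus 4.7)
My plan is to verify the identity $\partial_{u,\bm\beta}(C_j) = R_i(\partial_{u',\bm{\beta'}}(C_j))$ coordinate-by-coordinate, by analyzing how the soap film $C_j$ propagates through the leftmost positive crossing at strands $i, i+1$ that is added when passing from $G_{u',\bm{\beta'}}$ to $G_{u,\bm\beta}$. For mutable soap films the identity is trivial, since both sides vanish; so I may assume $C_j$ is frozen, and I use the bijection between soap films in $G_{u,\bm\beta}$ and $G_{u',\bm{\beta'}}$ induced by the shared set of bridges.

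Since $R_i$ acts as the identity on each coordinate $l \neq i$, the first task is to show $\partial_{u,\bm\beta}(C_j)_l = \partial_{u',\bm{\beta'}}(C_j)_l$ in that case. This is immediate by continuity: the new crossing only touches strands at positions $i, i+1$, so for $l \notin \{i-1, i, i+1\}$ the $l$-th boundary region extends unchanged across the crossing; and for $l = i-1$ (resp.\ $l = i+1$), it is continuously connected to the old boundary region through the ``north'' (resp.\ ``south'') region around the new crossing. Hence coverage agrees on both sides.

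The crux is the coordinate $l = i$. Writing $a = \partial_{u',\bm{\beta'}}(C_j)$, and denoting by $|\mathrm{N}|, |\mathrm{E}|, |\mathrm{S}|, |\mathrm{W}| \in \{0,1\}$ the coverages by $C_j$ of the four regions around the new crossing, the observations above yield $|\mathrm{N}| = a_{i-1}$, $|\mathrm{E}| = a_i$, $|\mathrm{S}| = a_{i+1}$, and $|\mathrm{W}| = \partial_{u,\bm\beta}(C_j)_i$. The desired equality $\partial_{u,\bm\beta}(C_j)_i = a_{i-1} + a_{i+1} - a_i$ is then equivalent to the flux balance $|\mathrm{W}| + |\mathrm{E}| = |\mathrm{N}| + |\mathrm{S}|$. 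I would prove this by case analysis, using the propagation rule at a positive crossing, namely that a soap film clings to the same pair of strands on both sides of the crossing, potentially widening or narrowing as the strands cross.

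The main obstacle is executing that case analysis, in particular ruling out certain a priori possible configurations in $\{0,1\}^4$ that would violate the balance, such as ``only $\mathrm{E}$ covered'' or ``$\mathrm{N}$ and $\mathrm{S}$ covered but neither $\mathrm{E}$ nor $\mathrm{W}$''. These exclusions should follow from the topological consistency of soap films as connected 2D regions with boundary on strands in the 3D plabic graph, combined with the specific over/under information at the positive crossing. Once the admissible configurations are pinned down, the flux balance can be checked case-by-case, completing the proof of the lemma.
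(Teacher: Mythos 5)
Your overall strategy coincides with the paper's: both proofs reduce the lemma to a local analysis at the new leftmost crossing, noting that the coordinates $l\neq i$ are unchanged (and fixed by $R_i$) and that the $i$-th coordinate must transform as $a_i\mapsto a_{i-1}+a_{i+1}-a_i$. Your ``flux balance'' $|\mathrm{W}|+|\mathrm{E}|=|\mathrm{N}|+|\mathrm{S}|$ is exactly the content of the paper's table of six admissible triples $(a_{i-1},a_i,a_{i+1})$, and your identifications $|\mathrm{N}|=a_{i-1}$, $|\mathrm{E}|=a_i$, $|\mathrm{S}|=a_{i+1}$, $|\mathrm{W}|=\partial_{u,\bm\beta}(C_j)_i$ check out against every row of that table.

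The one place your plan does not close is the step you yourself flag as the main obstacle: excluding the configurations $(a_{i-1},a_i,a_{i+1})=(1,0,1)$ and $(0,1,0)$, which would force $|\mathrm{W}|=2$ or $-1$. You propose to derive these exclusions from the topological consistency of soap films as connected regions together with the over/under data at the crossing, but local topology is not enough. A soap film may meet a neighborhood of the crossing in one sheet lying in $\mathrm{N}$ and another in $\mathrm{S}$ that reconnect far away in the graph, so connectedness does not locally forbid $(1,0,1)$; similarly, a film clinging to both crossing strands only in $\mathrm{E}$ is not ruled out by any purely local topological consideration. The paper instead excludes these two triples by invoking the fact that the letters outside $J_{u,\bm\beta}$ form a \emph{positive distinguished subexpression} for $u$ inside $\bm\beta$ (citing \cite{GLSBS22}) --- a combinatorial constraint on which cycles can occur, not a consequence of the film being a connected surface. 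So the architecture of your argument is the same as the paper's, but the exclusion step needs this Deodhar-type input rather than the topological argument you sketch.
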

\begin{proof}
We consider possible cases for $\partial_{u,\bm\beta}(C_{j})$ shown in the Table \ref{tab:table for r_i}.
 Two cases when $$\partial_{u,\bm\beta}(C_{j})=(a_1,\ldots,a_{i-2},1,0,1,a_{i+2},\ldots, a_n)$$ and
 $$\partial_{u,\bm\beta}(C_{j})=(a_1,\ldots,a_{i-2},0,1,0,a_{i+2},\ldots, a_n)$$ do not happen due to the fact that we choose a positive distinguished subexpression for $u$ inside $\bm\beta$, see \cite{GLSBS22}. Note that in these cases one would get $a_i=2$ or $a_i=-1$ which contradicts the fact that $a_i\in \{0,1\}$. 
 %The rest of the proof follows from the following table.
\end{proof}

\setlength{\arrayrulewidth}{0.5mm}
\setlength{\tabcolsep}{18pt}
\renewcommand{\arraystretch}{1.5}
\begin{table}[]
    \centering
\begin{tabular}{ |p{3cm}|p{3cm}|p{3cm}|}
\hline
\multicolumn{3}{|c|}{Triples $(a_{i-1},a_i,a_{i+1})$ in Lemma \ref{lem: case 1}} \\
\hline
$\partial_{u',\bm{\beta'}}(C_{j})$& $\partial_{u,\bm\beta}(C_{j})$ & $R_i(\partial_{u',\bm{\beta'}}(C_{j}))$ \\
\hline
(1,0,0) & (1,1,0)  & (1,1,0)  \\
(1,1,0) & (1,0,0)   & (1,0,0)  \\
(1,1,1)  & (1,1,1)  & (1,1,1)  \\
(0,0,0)  & (0,0,0)  & (0,0,0)  \\
(0,0,1)  & (0,1,1)  & (0,1,1)  \\
(0,1,1) & (0,0,1) & (0,0,1) \\
\hline
\end{tabular}
    \caption{Triples $(a_{i-1},a_i,a_{i+1})$ in Lemma 3.16}
    \label{tab:table for r_i}
\end{table}

As a combination of Lemma \ref{lem: inner product under R} with Lemma \ref{lem: case 1} above, we obtain the following Corollary \ref{cor: main cor for case a}.

\begin{cor}\label{cor: main cor for case a}
We have $D^{u,\bm\beta}=D^{u',\bm\beta'}$.
\end{cor}

\begin{proof}
We use the Definition \ref{defn: boundary} of the boundary correction matrix along with Lemma \ref{lem: inner product under R} and Lemma \ref{lem: case 1}, and obtain the following series of equalities: 
 \begin{align*}
D_{i,j}^{u,\beta} & =(\partial_{u,\bm\beta}(C_{i}),\partial_{u,\bm\beta}(C_{j}))\\
& = (R_{i}(\p_{u',\bm\beta'}(C_i)),R_{i}(\p_{u',\bm\beta'}(C_j)))\\
& = (\partial_{u',\bm{\beta'}}(C_{i}),\partial_{u',\bm{\beta'}}(C_{j}))\\
& = D^{u',\bm{\beta'}}_{i,j},
\end{align*}
 for all $1\le i,j\le m+f$. Therefore $D^{u,\bm\beta}=D^{u',\bm\beta'}$. 

\end{proof}

%\begin{cor}
% We have 
 %$$
 %(\partial_{u,\beta}(C_{\mathfrak{f}_i}),\partial_{u,\beta}%%(C_{\mathfrak{f}_j}))=(\partial_{u,\beta'}%(C_{\mathfrak{f}_i}),\partial_{u,\beta'}(C_{\mathfrak{f}_j})).
% $$. This implies that $\bhat=\widehat{B^{u',\beta'}}$.
%\end{cor}
%Therefore, we have 

\subsection{Auxiliary lemmas for Case $B$}\label{subsec: auxiliary lemmas B}
In this subsection, we assume the \textbf{Case B} situation in Theorem \ref{thm: main thm}. We assume that $1\in J_{u,\bm\beta}$. To be specific, let us write $\bm\beta=s_i\bm\beta'$ and $u$ is a subword of $\bm\beta'$. We frequently make use of observations that was previously mentioned in Remark \ref{rmk: leftmost frozen} and Remark \ref{ex: leftmost frozen soap}: $1\in J_{u,\bm\beta}$ is in correspondence with $d_L$ and $C_{m+1}$. Note also that we have $\partial_{u,\bm\beta} (C_{m+1})=e_i$. Our goal is to prove Lemma \ref{lem: final lem} which was used in the proof of Theorem \ref{thm: main thm}. To achieve this goal, we introduce subsequent three lemmas.\\

%In this subsection, we introduce subsequent four lemmas in order to prove the \textbf{Case B} of Theorem \ref{thm: main thm}. 
%We will define a transformation $R_i$ and show that it is a reflection in Lemma \ref{lem: reflection R_i}. Then the Lemma \ref{lem: inner product under R} and Lemma \ref{lem: case 1}, which were used in the proof of Theorem \ref{thm: main thm} follow from Lemma \ref{lem: reflection R_i}.
%We prove Lemma \ref{lem: compare H} and Lemma \ref{lem: compare partial} where combination of both Lemmas lead to Lemma \ref{lem: final lem}.\\

%Assume that $1\in J_{u,\bm{\beta}}$.  To be specific, let us write $\bm\beta=s_i\bm\beta'$ and $u$ is a subword of $\bm\beta'$. Then we observe that this $1\in J_{u,\bm{\beta}}$ corresponds to the leftmost bridge in $G_{u,\bm\beta}$ and we have $\partial_{u,\bm\beta} (C_{m+1})=e_i$.
%{\bf Case B:}  $1\in J_{u,\underline{\beta}}$.\\
%Recall that $1\in J_{u,\underline{\beta}}$ corresponds to the leftmost bridge in $G_{u,\bm\beta}$ and the $C_{m+1}$. Suppose that $\partial (C_{m+1})=e_i$. 
Given a soap film $C_x$, we denote by $\mathfrak{a}_x,\mathfrak{b}_x,\mathfrak{c}_x,\mathfrak{d}_x$ its multiplicities in the regions near $d_L$, see Figure \ref{fig: H and F}. 
\begin{lem}
\label{lem: b m+1}
We have 
$$
H_{m+1,x}=-\frac{1}{2}(\mathfrak{a}_x+\mathfrak{c}_x)+\mathfrak{b}_x,\ {D}_{m+1,x}=(e_i,\partial (C_x))=-\mathfrak{d}_x+\frac{1}{2}\mathfrak{a}_x+\frac{1}{2}\mathfrak{c}_x, 
$$
and
$$
\widehat{b}_{m+1,x}=\mathfrak{b}_x-\mathfrak{d}_x,\ \widehat{b}_{x,m+1}=\mathfrak{a}_x+\mathfrak{c}_x-\mathfrak{b}_x-\mathfrak{d}_x.
$$
\end{lem}
\begin{proof}
The first equation follows from the definitions and Figure \ref{fig: H and F} using the fact that $C_{x}$ only covers the region in the left of bridge $m+1$. The second follows from 
$$
\widehat{b}_{m+1,x}=H_{m+1,x}+{D}_{m+1,x},\ \widehat{b}_{x,m+1}=-H_{m+1,x}+{D}_{m+1,x}
$$
which holds since $H$ is skew-symmetric and $D$ is symmetric.
\end{proof}
%Next, we measure the difference between the half-arrow matrices $H=H_{u,\beta}$ and $H'=H_{u,\beta'}.$

We study the difference between the soap films in 3D plabic graphs $G_{u,\bm\beta}$ and $G_{u,\bm{\beta'}}$, and the corresponding quivers $Q_{u,\bm\beta}$ and $Q_{u,\bm{\beta'}}$.
The soap films for mutable vertices of $Q_{u,\bm{\beta'}}$ do not change, but soap films for the frozen ones change according to the propagation rule across $d_L$. %{\color{red} Well, this is true for frozen for $\beta'$, but need to say better} 
Let $x$ and $y$ be two vertices in $Q_{u,\bm\beta}$, and $x',y'$ be the corresponding %frozen
vertices in $Q_{u,\bm{\beta'}}$.  
\begin{remark}
\label{rem: reordering}
Recall our indexing conventions in Remark \ref{rmk: ordering rmk}, where all mutable vertices are listed before all frozen ones. If $x'$ (resp. $y'$) is mutable then $x$ and $y$ are also mutable. If $x'$ (resp. $y'$) is frozen then $x$ (resp. $y$) could be either frozen or mutable. Overall, the relative order of the vertices which do not change type (mutable stay mutable or frozen stay frozen) is unchanged, but the frozen vertices which become mutable are inserted in the list of mutable ones, possibly in a complicated way. We also insert the new frozen vertex labeled by $m+1$.
\end{remark}
Let us first observe the difference between the half weight matrices $H_{y,x}:=\Hmatrix_{y,x}$ and $H'_{y',x'}:=H^{u,\bm\beta'}_{y',x'}$.

\begin{lem}\label{lem: compare H}
 Suppose $x,y\neq m+1$ and $x',y'$ correspond to $x,y$ as above.  We obtain \[
H'_{y',x'}=H_{y,x}+H_{m+1,x}{D}_{m+1,y}-{D}_{m+1,x}H_{m+1,y}.
\]
\end{lem}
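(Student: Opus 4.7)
The plan is to localize the difference $H'_{y',x'}-H_{y,x}$ at the newly-inserted leftmost bridge $d_{m+1}$ (which corresponds to $\sigma_i$) and then match the local contribution against the right-hand side algebraically.

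First, I would compare the two 3D plabic graphs $G_{u,\bm\beta}$ and $G_{u,\bm{\beta'}}$ region-by-region. Going from $\bm{\beta'}$ to $\bm{\beta}$ inserts the leftmost bridge, which splits the previously-connected leftmost middle region (between strands $i$ and $i+1$) into a middle-right piece of multiplicity $b_x$ and a middle-left piece of multiplicity $d_x$. By direct inspection of the propagation rule in Figure \ref{fig: propagation rule}, the right-side multiplicity $b_x$ agrees with the multiplicity of the original unsplit middle region in $G_{u,\bm{\beta'}}$, and the upper and lower multiplicities $a_x,c_x$ (as well as all multiplicities of $C_z$ in regions further to the right) are unchanged; only $d_x$ genuinely differs. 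Crucially, the new middle-left region is adjacent only to $d_{m+1}$ and the left boundary, so it cannot appear in the local picture around any other bridge.

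Since $H$ decomposes as a sum of local contributions at each bridge and, by the previous paragraph, every bridge $d_z$ with $z\neq m+1$ has the same surrounding regions and the same $C_x$- and $C_y$-multiplicities in both graphs, the local contribution of $d_z$ to $H_{y,x}$ matches its contribution to $H'_{y',x'}$. Therefore
$$
H_{y,x}-H'_{y',x'}=\bigl(\text{local contribution at $d_{m+1}$ to $H_{y,x}$}\bigr).
$$
Reading this contribution off from the half-arrow configuration in Figure \ref{fig:half arrow config}, using the multiplicities $(a_x,b_x,c_x,d_x)$ and $(a_y,b_y,c_y,d_y)$ near $d_{m+1}$, one obtains
$$
\tfrac{1}{2}\bigl[(a_y+c_y)(d_x-b_x)+(a_x+c_x)(b_y-d_y)\bigr]+(b_x d_y-b_y d_x).
$$
A short bilinear expansion using Lemma \ref{lem: b m+1}, which gives $H_{m+1,x}=-\tfrac{1}{2}(a_x+c_x)+b_x$ and $D_{m+1,x}=-d_x+\tfrac{1}{2}(a_x+c_x)$ (and analogously for $y$), verifies that the displayed quantity equals $D_{m+1,x}H_{m+1,y}-H_{m+1,x}D_{m+1,y}$. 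Rearranging yields the stated identity.

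The main obstacle is the first step: one must verify carefully that the soap-film propagation across the newly-inserted bridge only modifies multiplicities in the middle-left region, and that no other cycle acquires spurious support in leftmost-area regions bordering bridges $d_z$ with $z\neq m+1$. Once this locality statement is in hand, the reduction to a single local contribution is immediate, and the remaining algebraic identity is a routine bilinear calculation.
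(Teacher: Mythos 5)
Your proposal is correct and follows essentially the same route as the paper's proof: the paper likewise reduces the comparison to the local multiplicities $a_x,b_x,c_x,d_x$ (and their $y$-counterparts) around the newly inserted leftmost bridge, records the resulting discrepancy $H_{y,x}-H'_{y',x'}=(b_xd_y-b_yd_x)+\tfrac{1}{2}(a_y+c_y)(d_x-b_x)+\tfrac{1}{2}(a_x+c_x)(b_y-d_y)$, and identifies it with $D_{m+1,x}H_{m+1,y}-H_{m+1,x}D_{m+1,y}$ via Lemma \ref{lem: b m+1}. The only cosmetic difference is that the paper first dispatches the case where $x'$ or $y'$ is mutable (where all four local multiplicities vanish), which your uniform treatment subsumes.
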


\begin{proof}
    %First, we note that $H_{y',x'}=0$ as we removed the leftmost bridge in $G_{u,\beta}$. 
  If $x'$ is mutable for $Q_{u,\bm{\beta'}}$ then $D_{m+1,x}=H_{m+1,x}=0$ and $H'_{y',x'}=H_{y,x}$, and the result holds. The case when $y'$ is mutable is similar, so from now on we assume that $x',y'$ are frozen in $Q_{u,\bm{\beta'}}$. 

In Figure \ref{fig: H and F}, we show the multiplicities of cycles corresponding to $x,y$ in various regions of $G_{u,\bm\beta}$. For the graph $G_{u,\bm\beta'}$ and a soap film $C_{x'}$, we simply erase $d_L$ and label the new region with $\mathfrak{d}_{x'}$. Note that the multiplicities $\mathfrak{a}_{x'},\mathfrak{b}_{x'},\mathfrak{c}_{x'}$ do not change.
    
    From Figure \ref{fig: H and F}, we observe that \[
    H_{y,x}=H'_{y',x'}-(\mathfrak{d}_x\mathfrak{b}_y-\mathfrak{d}_y\mathfrak{b}_x)-\frac{1}{2}(\mathfrak{d}_y-\mathfrak{b}_y)(\mathfrak{a}_x+\mathfrak{c}_x)+\frac{1}{2}(\mathfrak{d}_x-\mathfrak{b}_x)(\mathfrak{a}_y+\mathfrak{c}_y),
    \]
and by Lemma \ref{lem: b m+1} we get
\[
{D}_{m+1,y}=(e_i,\partial_{u,\bm\beta}( C_y))=-\mathfrak{d}_y+\frac{1}{2}\mathfrak{a}_y+\frac{1}{2}\mathfrak{c}_y,
\]
\[
{D}_{m+1,x}=(e_i,\partial_{u,\bm\beta}(C_x))=-\mathfrak{d}_x+\frac{1}{2}\mathfrak{a}_x+\frac{1}{2}\mathfrak{c}_x,
\]
\[
H_{m+1,x}=-\frac{1}{2}(\mathfrak{a}_x+\mathfrak{c}_x)+\mathfrak{b}_x,
\]
and \[
H_{m+1,y}=-\frac{1}{2}(\mathfrak{a}_y+\mathfrak{c}_y)+\mathfrak{b}_y.
\]
Then the direct computation enables us to verify that $H'_{y',x'}=H_{y,x}+H_{1,x}{D}_{1,y}-{D}_{1,x}H_{1,y}$.
\end{proof}
%\begin{figure}
  %  \centering
% \begin{tikzpicture}
% \draw [color=blue] (0,0) to (6,0);
 %\draw [color=blue] (0,1) to (6,1);
 %\draw [color=blue] (0,2) to (6,2);
 %\draw [color=blue] (0,3) to (6,3); 
%\draw (3,1) to (3,2);
% \node [scale =0.5, circle, draw=black, fill=black] (b) at (3,1) {};
%\node [scale =0.5, circle, draw=black, fill=white] (w) at (3,2) {};

 %\node (a) at (3,2.5) {$a_x, a_y$};
 %\node (a) at (4.5,1.5) {$b_x, b_y$};
 % \node (a) at (1.5,1.5) {$d_x, d_y$};
 %  \node (a) at (3,0.5) {$c_x, c_y$};
% \draw[color=red, -{Stealth[red]}] (2.5,1.7)   -- (3.5,1.7);
%\draw[color=red, -{Stealth[red]}] (2.5,1.3)   -- (3.5,1.3);
%\draw [color=red, -{Stealth[red]}] (3.75, 1.7) -- (3.1, 2.3);
%\draw [color=red, -{Stealth[red]}] (2.9, 2.3) -- (2.25, 1.7);

%\draw [color=red, -{Stealth[red]}] (2.25, 1.3) -- (2.9, 0.7);
%\draw [color=red, -{Stealth[red]}] (3.1, 0.7) -- (3.75, 1.3);
  
 %\end{tikzpicture}

%\end{figure}

\begin{figure}
    \centering
 \begin{tikzpicture}
 \filldraw[color=piggypink] (1.5,0)  -- (4.5,0) -- (4.5,3) -- (1.5,3) -- cycle;
 \draw [thick] (1.5,0) to (4.5,0);
 \draw [thick] (1.5,1) to (4.5,1);
 \draw [thick] (1.5,2) to (4.5,2);
 \draw [thick] (1.5,3) to (4.5,3); 
\draw [ultra thick](3,1) to (3,2);
 \node [scale =0.5, circle, draw=black, fill=black] (b) at (3,1) {};
\node [scale =0.5, circle, draw=black, fill=white] (w) at (3,2) {};
\node (a) at (3,2.5) {$\mathfrak{a}_x, \mathfrak{a}_y$};
 \node (a) at (4,1.5) {$\mathfrak{b}_x, \mathfrak{b}_y$};
  \node (a) at (2,1.5) {$\mathfrak{d}_x, \mathfrak{d}_y$};
   \node (a) at (3,0.5) {$\mathfrak{c}_x, \mathfrak{c}_y$};
 %\node (a) at (3,2.5) {$a_x, a_y$};
 %\node (a) at (4.5,1.5) {$b_x, b_y$};
  %\node (a) at (1.5,1.5) {$d_x, d_y$};
   %\node (a) at (3,0.5) {$c_x, c_y$};
 \draw[color=red, {Stealth[red]}-,thick] (3.5,1.7)-- (2.5,1.7)  ;
\draw[color=red, {Stealth[red]}-,thick] (3.5,1.3) -- (2.5,1.3) ;
\draw [color=red, {Stealth[red]}-,thick] (3.1, 2.3)-- (3.75, 1.8) ;
\draw [color=red, {Stealth[red]}-,thick] (2.25, 1.8)-- (2.9, 2.3) ;

\draw [color=red, {Stealth[red]}-,thick] (2.25, 1.2)--(2.9, 0.7);
\draw [color=red, {Stealth[red]}-,thick] (3.1,0.7)--(3.75,1.2);
 \end{tikzpicture}
 \caption{Local picture related to $x$ and $y$}
    \label{fig: H and F}
 \end{figure}

%Want:
%$$
%\widehat{B}_{u,\beta}(x,y)=\widehat{B}_{u',\beta'}(x',y')+\partial C_{1,x}\widehat{B}_{u,\beta}(1,y)
%$$
%add to column $y$ the column 1 times $f(x')$
%$$
%H_{x,y}+\partial C_{x,y}=H_{x',y'}+\partial C_{x',y'}+\partial C_{1,x}(H_{1,y}+\partial C_{1,y})
%$$
%Want:
%$$
%\partial C_{x,y}=\partial C_{x',y'}+\partial C_{1,x}\partial C_{1,y}-H_{1,x}\partial C_{1,y}
%$$
%$$
%\partial C_{x}=\partial C_{x'}+(d_x-b_x)C_1,
%\partial C_{y}=\partial C_{y'}+(d_y-b_y)C_1
%$$
Now we compare $\p_{u,\bm\beta} (C_x)$ with $\p_{u,\bm\beta'} (C_{x'})$. %where $C_x$ is a cycle in $G_{u,\bm{\beta}}$ and $C'_{x'}$ is the corresponding soap film in $G_{u,\bm{\beta'}}$. 
Note that $C_x$ and $C_{x'}$ only differ at $d_L$; we remove $d_L$ and obtain $G_{u,\bm{\beta'}}$.

\begin{lem}\label{lem: compare partial}
We have \begin{align*}
    \p_{u,\bm\beta'}(C_{x'})=\p_{u,\bm\beta} (C_x)+\widehat{b}_{m+1,x}e_i,\\
     \p_{u,\bm\beta'}(C_{y'})=\p_{u,\bm\beta} (C_y)+\widehat{b}_{m+1,y}e_i.
\end{align*}
\end{lem}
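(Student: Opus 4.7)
The plan is to track how each soap film $C_x$ with $x \neq m+1$ is modified when the leftmost bridge $\f_1$ of $G_{u,\bm\beta}$, which lies between strands $i$ and $i+1$, is removed to form $G_{u,\bm{\beta'}}$. I will show that only the $i$-th coordinate of $\p(C_x)$ is affected by this deletion, and that its change equals $b_x - d_x$; by Lemma \ref{lem: b m+1} this equals $\widehat{b}_{m+1,x}$, which gives the first identity. The second identity for $y$ follows by the same argument.

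For any coordinate $l \neq i$, I would show $\p(C_x)_l = \p(C'_{x'})_l$. The $l$-th boundary region lies between strands $l$ and $l+1$, hence strictly above strand $i$ or strictly below strand $i+1$. Since $\f_1$ is confined to the horizontal strip between strands $i$ and $i+1$, deleting it does not alter the portion of any soap film lying above strand $i$ or below strand $i+1$, and in particular does not change the multiplicity of $C_x$ in the $l$-th boundary region.

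For the $i$-th coordinate, I would use that $\f_1$ is leftmost: no bridge lies between $\f_1$ and the left edge of $G_{u,\bm\beta}$, so the region immediately to the left of $\f_1$ extends unobstructed through the strip between strands $i$ and $i+1$ and meets the $i$-th boundary region. Hence $\p(C_x)_i = d_x$. In $G_{u,\bm{\beta'}}$, with $\f_1$ absent, the left and right regions next to the former bridge merge into a single region. The soap film $C'_{x'}$ coincides with $C_x$ in every region to the right of where $\f_1$ stood, because all other bridges are unchanged; in particular the multiplicity just to the right of the erased bridge remains $b_x$. With no bridge to cut the propagation, this multiplicity extends through the merged region all the way to the boundary, so $\p(C'_{x'})_i = b_x$.

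Subtracting yields $\p(C'_{x'}) - \p(C_x) = (b_x - d_x) e_i = \widehat{b}_{m+1,x} e_i$. The main obstacle is the final step of the previous paragraph: verifying that the removal of $\f_1$ transports the multiplicity $b_x$ directly across the merged region regardless of the values of $a_x$ and $c_x$. I would resolve this by inspecting each of the seven local configurations in Figure \ref{fig: propagation rule} and confirming in every case that, in the absence of the bridge, the soap film's multiplicity on the left of the former bridge position equals that on the right.
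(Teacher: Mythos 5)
Your argument is correct, and it is essentially the alternative proof that the paper itself sketches in the remark immediately following its official proof: you reduce everything to the two observations $\p(C_x)_i=d_x$ and $\p(C'_{x'})_i=b_x$, and then invoke the identity $\widehat{b}_{m+1,x}=b_x-d_x$ from Lemma \ref{lem: b m+1}. The paper's proof instead runs through the seven local configurations of Figure \ref{fig: propagation rule} one by one, in each case reading off $\p(C_x)$ and $\p(C'_{x'})$ from the picture, recomputing $H_{m+1,x}$ and $D_{m+1,x}$ from scratch, and checking the identity numerically. Your route is shorter and explains \emph{why} the correction is $(b_x-d_x)e_i$: since $\f_1$ is the leftmost letter, the region to its left is literally the $i$-th boundary region (so its multiplicity is $d_x$), deleting the bridge merges it with the region of multiplicity $b_x$ coming from the right (which is unchanged because all other bridges and crossings are to the right), and no other boundary region is touched. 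The paper's case check buys self-containedness and doubles as a sanity check of the propagation rules, at the cost of length. One small point: the ``main obstacle'' you flag at the end is not really an obstacle, and you do not need to re-inspect the seven pictures for it. Those pictures describe how a soap film changes \emph{as it crosses a bridge}; once $\f_1$ is deleted there is no bridge there, so by the definition of soap-film propagation the multiplicity is simply constant across the merged region, independently of $a_x$ and $c_x$ (which live in the regions above and below, separated from the merged region by the strands). The only implicit hypothesis you share with the paper's Lemma \ref{lem: b m+1} is the identification of the local multiplicities $a_x,c_x,d_x$ with the $0/1$ entries of the boundary vector near the leftmost bridge, which is already used there and is therefore fair to assume.
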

%\textcolor{red}{Make a table.}

\begin{proof}
One observes that $\p_{u,\bm \beta'} (C_{x'})=\p_{u,\bm\beta} (C_x)+(\mathfrak{b}_x-\mathfrak{d}_x)e_i$. Then by Lemma \ref{lem: b m+1} we have $\widehat{b}_{m+1,x}=\mathfrak{b}_x-\mathfrak{d}_x$, and therefore we obtain $\p_{u,\bm\beta'}(C_{x'})=\p_{u,\bm\beta} (C_x)+\widehat{b}_{m+1,x}e_i$. Similarly, we have $\p_{u,\bm\beta'}(C_{y'})=\p_{u,\bm\beta} (C_y)+\widehat{b}_{m+1,y}e_i$.
\end{proof}

\begin{remark}
    Alternatively, one can observe that Lemma \ref{lem: compare partial} holds from Table \ref{tab:table for compare partial} using the propagation rule in Figure \ref{fig: propagation rule}.
\end{remark}

\setlength{\arrayrulewidth}{0.5mm}
\setlength{\tabcolsep}{18pt}
\renewcommand{\arraystretch}{1.5}
\begin{table}[]
    \centering
\begin{tabular}{ |p{2cm}|p{2cm}|p{2cm}|p{2cm}|p{2cm}|}
\hline
\multicolumn{5}{|c|}{Triples $(a_{i-1},a_i,a_{i+1})$ in Lemma \ref{lem: compare partial}} \\
\hline
$\partial_{u',\bm{\beta'}}(C_{x'})$ & $\partial_{u,\bm\beta}(C_{x})$ & $D_{m+1,x}e_i$ & $H_{m+1,x}e_i$ & $\hat{b}_{m+1,x}e_i$ \\
\hline
(0,1,0) & (0,0,0)  & (0,0,0) & (0,1,0) & (0,1,0)  \\
(1,1,0) & (1,0,0)  & (0,$\frac{1}{2}$,0) & (0,$\frac{1}{2}$,0) & (0,1,0)  \\
(0,1,1) & (0,0,1)  & (0,$\frac{1}{2}$,0) & (0,$\frac{1}{2}$,0) & (0,1,0)  \\
(1,1,1) & (1,0,1)  & (0,0,0) & (0,1,0) & (0,1,0)  \\
(1,1,0) & (1,1,0)  & (0,-$\frac{1}{2}$,0) & (0,$\frac{1}{2}$,0) & (0,0,0)  \\
(1,0,0) & (1,0,0)  & (0,-$\frac{1}{2}$,0) & (0,$\frac{1}{2}$,0) & (0,0,0)  \\
(1,1,0) & (1,0,0)  & (0,0,0) & (0,1,0) & (0,0,0)  \\
\hline
\end{tabular}
    \caption{Triples $(a_{i-1},a_i,a_{i+1})$ in Lemma \ref{lem: compare partial}. For example, the computations in the top row is from the first figure of Figure \ref{fig: propagation rule}.}
    \label{tab:table for compare partial}
\end{table}

Now we are able to compare $\bhat$ with $\widehat{B^{u,\bm{\beta'}}}$. 
\begin{lem}\label{lem: final lem}
We have \[
\widehat{b'}_{y',x'}=\widehat{b}_{y,x}+\widehat{b}_{m+1,x}\widehat{b}_{y,m+1}.
\]
\end{lem}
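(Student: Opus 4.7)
The plan is to write $\widehat{b'}_{y',x'}=H'_{y',x'}+D'_{y',x'}$ and then treat the two summands separately, using the previous two auxiliary lemmas. For the first summand, Lemma \ref{lem: compare H} already provides the identity
\[
H'_{y',x'}=H_{y,x}+H_{m+1,x}D_{m+1,y}-D_{m+1,x}H_{m+1,y}.
\]
For the second summand, I would use Lemma \ref{lem: compare partial} together with the bilinearity and symmetry of the boundary correction form (Lemma \ref{lem: prop of dot}) to expand
\[
D'_{y',x'}=(\partial(C_y)+\widehat{b}_{m+1,y}e_i,\,\partial(C_x)+\widehat{b}_{m+1,x}e_i),
\]
and use $(e_i,e_i)=-1$ together with $(e_i,\partial(C_x))=D_{m+1,x}$ to rewrite this as
\[
D'_{y',x'}=D_{y,x}+\widehat{b}_{m+1,x}D_{m+1,y}+\widehat{b}_{m+1,y}D_{m+1,x}-\widehat{b}_{m+1,x}\widehat{b}_{m+1,y}.
\]

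Adding the two expressions and collecting like terms gives
\[
\widehat{b'}_{y',x'}=\widehat{b}_{y,x}+\bigl(H_{m+1,x}+\widehat{b}_{m+1,x}\bigr)D_{m+1,y}+\bigl(\widehat{b}_{m+1,y}-H_{m+1,y}\bigr)D_{m+1,x}-\widehat{b}_{m+1,x}\widehat{b}_{m+1,y}.
\]
The decomposition $\widehat{b}_{m+1,\bullet}=H_{m+1,\bullet}+D_{m+1,\bullet}$ turns the second parenthesis into $D_{m+1,y}$, and on the other side the skew-symmetry of $H$ and symmetry of $D$ give $\widehat{b}_{y,m+1}=-H_{m+1,y}+D_{m+1,y}$, so $\widehat{b}_{m+1,y}+\widehat{b}_{y,m+1}=2D_{m+1,y}$. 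These two identities let me rewrite the remaining correction as $\widehat{b}_{m+1,x}\widehat{b}_{y,m+1}$, which is precisely the target.

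The step I expect to require most care is purely bookkeeping: keeping track of which argument is first versus second in $(\cdot,\cdot)$ and in the $H$ matrix, since $H$ is skew and $D$ is symmetric, and one sign slip turns the answer into $\widehat{b}_{m+1,y}\widehat{b}_{x,m+1}$ or similar. I would keep everything expressed in $H_{m+1,\bullet}$ and $D_{m+1,\bullet}$ until the last step, and only then substitute the $\widehat{b}$'s, precisely to avoid this. Since the identity is purely algebraic once Lemmas \ref{lem: compare H} and \ref{lem: compare partial} are in hand, no further geometric input about the graph is required.
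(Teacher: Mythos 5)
Your proposal is correct and follows essentially the same route as the paper: decompose $\widehat{b'}_{y',x'}=H'_{y',x'}+D'_{y',x'}$, apply Lemma \ref{lem: compare H} to the first summand, expand the second via Lemma \ref{lem: compare partial} and bilinearity with $(e_i,e_i)=-1$, and then collect terms using $\widehat{b}=H+D$ together with the skew-symmetry of $H$ and symmetry of $D$. The only difference is cosmetic bookkeeping in the final collection step (the paper substitutes $\widehat{b}_{m+1,\bullet}=H_{m+1,\bullet}+D_{m+1,\bullet}$ into $D'_{y',x'}$ first to get $D_{m+1,x}D_{m+1,y}-H_{m+1,x}H_{m+1,y}$ and then factors), and I verified your version of the algebra also closes correctly.
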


\begin{proof}
By Definition \ref{defn: boundary} and Lemma \ref{lem: compare partial}, we have
\begin{align*}
D'_{y',x'} & =(\p_{u,\bm\beta'}(C'_{y'}),\p_{u,\bm\beta'} (C'_{x'}))\\
& = \left(\p_{u,\bm\beta} (C_y)+\widehat{b}_{m+1,y}e_i, \p _{u,\bm\beta}(C_x)+\widehat{b}_{m+1,x}e_i\right)\\
& = (\p_{u,\bm\beta} (C_y),\p_{u,\bm\beta} (C_{x}))+\widehat{b}_{m+1,y}(e_i,\p_{u,\bm\beta} (C_x))+\widehat{b}_{m+1,x}(\p_{u,\bm\beta} (C_y),e_i)+\widehat{b}_{m+1,y}\widehat{b}_{m+1,x}(e_i,e_i)\\
& =D_{y,x}+\widehat{b}_{m+1,y}D_{m+1,x}+\widehat{b}_{m+1,x}D_{m+1,y}-\widehat{b}_{m+1,y}\widehat{b}_{m+1,x}\\
& = D_{y,x}+D_{m+1,x}D_{m+1,y}-H_{m+1,x}H_{m+1,y}.
\end{align*}

Therefore, by Lemma \ref{lem: compare H} we have \begin{align*}
\widehat{b'}_{y',x'}& =H'_{y',x'}+D'_{y',x'}\\
& = (H_{y,x}+H_{m+1,x}D_{m+1,y}-D_{m+1,x}H_{m+1,y})+ (D_{y,x}+D_{m+1,x}D_{m+1,y}-H_{m+1,x}H_{m+1,y})\\
& = \widehat{b}_{y,x}+\widehat{b}_{m+1,x}(D_{m+1,y}-H_{m+1,y})\\
& = \widehat{b}_{y,x}+\widehat{b}_{m+1,x}\widehat{b}_{y,m+1},
\end{align*}
as desired. The last equality follows from Remark \ref{rem: D is sym} and the fact that $\Hmatrix$ is a skew symmetric matrix.
\end{proof}

\section{Application: Cluster automorphism group of braid varieties}

\subsection{Cluster automorphism group of braid varieties}
We recall the definition and important properties of cluster automorphism group of braid varieties, discussed in \cite{clusterloci, LS22}.
%Recall from Section \ref{sec: cluster alg} that $\mathcal{A}$ denotes the cluster algebra from a braid variety quiver $Q_{u,\beta}$.
\begin{defn}\cite[Section 5.1]{LS22}\label{def: cluster auto}
Let $\mathcal{A}$ be a cluster algebra.
A cluster automorphism of $\mathcal{A}$ is an algebra automorphism $\varphi: \mathcal{A}\to \mathcal{A}$ such that for every cluster variable $z\in \mathcal{A}$, $\varphi(z)$ is a non-zero scalar multiple of $z$. The collection of cluster automorphisms of $\mathcal{A}$ is denoted by $\text{Aut}(\mathcal{A})$.
\end{defn}

Recall that Theorem \ref{thm: main thm} implies that $\det(\bhat)=(-1)^{m+f}$, and we use $\ahat$ to indicate its inverse matrix. Recall also from the introduction that we used ${col}_j(\ahat)$ to denote the $j$th column vector of $\ahat$.

\begin{lem}\label{lem: b tilde and ahat}
The $\ker(\til)$ in $\Z^{m+f}$ is a free abelian group of rank $f$ with the basis given by the column vectors
$\text{col}_{j}(\ahat)$ for $m+1\le j\le m+f$.
%the last $f$ columns of the inverse matrix $\ahat$.
\end{lem}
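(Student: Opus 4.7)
My plan is to deduce the statement directly from the identity $\bhat\,\ahat = I_{m+f}$, which holds because Theorem~\ref{thm: main thm} guarantees $\bhat$ is invertible over $\mathbb{Z}$, so $\ahat = \bhat^{-1}$ has integer entries.

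The crucial preliminary step will be to identify the first $m$ rows of $\bhat$ with (minus) the transpose of $\widetilde{B}$. Using the block decomposition displayed just before Theorem~\ref{thm: main thm}, the first $m$ rows of $\bhat$ form the $m\times(m+f)$ matrix $(B\mid E)$, where $B$ is the mutable-mutable corner. Since the first $m\times m$ block of $D$ vanishes, $B$ coincides with the corresponding block of $\Hmatrix$ and is therefore skew-symmetric. On the other hand, $\til$ is the first $m$ columns of $\bhat$, namely the block column $\begin{pmatrix} B \\ -E^T\end{pmatrix}$. Taking the transpose and using $B^T=-B$ yields $\widetilde{B}(Q_{u,\bm\beta})^T = (-B\mid -E) = -(B\mid E)$, so the first $m$ rows of $\bhat$ equal $-\widetilde{B}(Q_{u,\bm\beta})^T$. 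This identity is the bridge between $\bhat$ and $\ker\widetilde{B}(Q_{u,\bm\beta})$.

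With this in hand I can read off the basis from $\bhat\,\ahat = I$. For each $j\in\{m+1,\ldots,m+f\}$ and each $i\le m$, the $(i,j)$ entry gives $(\mathrm{row}_i\bhat)\cdot\mathrm{col}_j(\ahat)=0$, i.e.\ the $i$-th coordinate of $-\widetilde{B}(Q_{u,\bm\beta})^T\cdot\mathrm{col}_j(\ahat)$ vanishes. Ranging over $i\le m$ shows $\mathrm{col}_j(\ahat)\in \ker\widetilde{B}(Q_{u,\bm\beta})$ for every $j\ge m+1$, where (consistent with the transposed convention of Definition~\ref{def: B tilde defn} and Remark~\ref{rmk: really full rank}) we interpret $\ker\widetilde{B}(Q_{u,\bm\beta}) = \ker\widetilde{B}(Q_{u,\bm\beta})^T \subset \mathbb{Z}^{m+f}$. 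Linear independence of these $f$ columns is automatic from the invertibility of $\ahat$.

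To upgrade the collection to a $\mathbb{Z}$-basis (not merely a $\mathbb{Q}$-basis of a rank-$f$ subgroup), I will take an arbitrary $v\in \ker\widetilde{B}(Q_{u,\bm\beta})\cap\mathbb{Z}^{m+f}$, set $w := \bhat v$, and note $w\in\mathbb{Z}^{m+f}$ since $\bhat$ is integral. By the preliminary identity, the first $m$ coordinates of $w$ are the entries of $-\widetilde{B}(Q_{u,\bm\beta})^T v$, which vanish by assumption on $v$. Hence $v = \ahat w = \sum_{j=m+1}^{m+f} w_j\,\mathrm{col}_j(\ahat)$, exhibiting $v$ as an integer linear combination of the last $f$ columns of $\ahat$. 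I anticipate no serious obstacle; the only care required is tracking the sign $B^T=-B$ and matching the paper's kernel convention. The rank count (which could alternatively have been invoked via really full rank) then follows as a byproduct rather than an input.
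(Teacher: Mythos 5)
Your proposal is correct and follows essentially the same route as the paper: both arguments rest on the observation that $\widetilde{B}(Q_{u,\bm\beta})$ applied to $v$ is (up to sign, which is irrelevant for the kernel) the first $m$ coordinates of $\bhat v$, and then read off the kernel by writing $v=\ahat w$ with $w=\bhat v\in\Z^{m+f}$. Your treatment is slightly more careful than the paper's in making the skew-symmetry $B^T=-B$ and the resulting sign explicit, and in pinning down the kernel convention, but the substance is identical.
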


\begin{proof}
Consider a vector $v\in \Z^{m+f}$, then $\til v$ is given by the first $m$ coordinates of $\bhat v$. Since ${col}_j(\ahat)$
%the columns of $\ahat$ 
form a basis of $\Z^{m+f}$, we can expand $v$ in terms of this basis, and write $v=\ahat w$ for some $w\in \Z^{m+f}$. Then
$\bhat v=\bhat \ahat w=w$. 
We conclude that $v$ is in $\ker(\til)$ if and only if the first $m$ coordinates of $w$ vanish, and the result follows.

\end{proof}
Given an $k\times l$ integer matrix $M$, we consider a map $\text{mult}: (\mathbb{C}^\ast)^l\to (\mathbb{C})^k$ such that $$
\text{mult}(M)(t_1,\ldots,t_l)=\Pi_{j=1}^{l}t_j^{M_{i,j}}.
$$
Lam and Speyer \cite{LS22} related the properties of the matrix $\widetilde{B}$ to the action of an algebraic torus on the cluster variety in the following way.

\begin{thm}\cite[Proposition 5.1]{LS22}\label{thm: lam-speyer}
Given any seed $(x,\tilde{B})$, the cluster automorphism group and its action on cluster varieties is described via $\tilde{B}$ matrix in the following way.

\begin{enumerate}
    \item There is an automorphism $\varphi\in \text{Aut}(\mathcal{A})$ such that $\phi(x_i)=t_ix_i$ if and only if $v=(t_1,\ldots, t_{m+f})\in \text{ker}(\text{mult}(\tilde{B}))$.
    \item %Suppose that the integer vector $v=(a_1,\ldots,a_d)$ satisfies $\tilde{B} v=0$ and that $\textbf{x}=(x_1,\ldots, x_{m+f})$ be an initial seed of $X_{u,\beta}$. 
    Suppose that $v\in \text{ker}(\text{mult}(\tilde{B}))$. Then there is a $\CC^\times$ action on the braid variety $\X$ given by $x_i\mapsto t^{a_i}x_i$ where $1\le i\le m+f$. This action is a quasi-cluster homomorphism in the sense of \cite{Fraser}.
\end{enumerate}
\end{thm}

\begin{remark}
We rephrased the statement in \cite[Proposition 5.1]{LS22} based on Remark \ref{rmk: really full rank}.
\end{remark}

Combining Lemma \ref{lem: b tilde and ahat} with Theorem \ref{thm: lam-speyer}, we describe the cluster automorphism group and its action on braid varieties as follows.

\begin{cor}\label{cor: torus}
Let us fix $Q_{u,\bm\beta}$ as before.
For $1\le j\le f$,
   let us write ${col}_{m+j}(\ahat)=(a_{1,m+j},\ldots,a_{m+f,m+j})$.
    Then $\mathrm{Aut}(\mathcal{A}(Q_{u,\bm\beta}))$ is an algebraic torus with coordinates $t_1,\ldots,t_f$ which acts on $\X$ by 
    \[
x_i\to \prod t_j^{a_{i,m+j}}\cdot x_i,
\]
where $\textbf{x}=(x_1,\ldots, x_{m+f})$ is an initial seed of $\X$.
\end{cor}

\begin{remark}
%Since the quiver $Q_{u,\beta}$ is a locally acyclic and has a really full rank (see \cite{GLSBS22} and Remark \ref{rmk: really full rank}), it follows from \cite{LS22} that $\ker(\til)$ is isomorphic to a torus where its dimension is equal to the number of frozen vertices in $Q_{u,\beta}$. 
When $u$ is an identity permutation, then the number of frozen variables are equal to the number of distinct Artin generators appearing in $\beta$. %thus $\text{Aut}(\mathcal{A}(Q_{u,\bm\beta}))\cong (\CC^\times)^{l}$. 
However, in general, the number of frozen vertices is bigger or equal to the number of distinct Artin generators appearing in $\beta$. See Example \ref{ex: running ex computation} and Example \ref{ex: double arrow} in
Section \ref{sec: ex of last sec}.
    %Theorem \ref{thm: ls main thm} implies that for $k=\dim \mathrm{Ker} \widetilde{B}=d-\mathrm{rank}(\widetilde{B})$, then there is an action of $(\CC^*)^k$ on the cluster variety $X(Q)$. 
    %Restricting ourselves to a braid variety quiver $Q_{u,\beta}$, Theorem \ref{thm: Lam-Speyer} implies that the torus $(\CC^*)^f$ of dimension $f$ acts on a braid variety $\X$.
    %The problem is that, however, there is no way of obtaining an explicit basis for $\Ker(\til)$.
\end{remark}
%Theorem \ref{thm: Lam-Speyer} redirect us to finding the explicit basis vectors of $\ker(\til)$. 
Lastly, we give an \textbf{inductive description} of the matrix $\ahat$. 
 Suppose that $\bm\beta=\sigma_i\bm\beta'$ and $u$ is a subword of $\bm\beta'$, as in {\bf Case B} as in Section \ref{subsec: auxiliary lemmas B}. Recall the definition of $\widehat{Z}^{u,\bm\beta}$ matrix from Definition \ref{defn: Z hat}.
 %Consider the matrix $Z_1$ obtained from $\widehat{B}^{u,\bm{\beta'}}$ by adding $(m+1)$-st row and column with $(-1)$ on diagonal and $0$ elsewhere. 

\begin{lem}\label{lem: inductive factorization for inverse}
We have $\ahat=R(\widehat{Z}^{u,\bm\beta})^{-1}L$ where the matrices $R$ and $L$ are as in Lemma \ref{lem: L and R}.
\end{lem}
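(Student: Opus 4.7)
The plan is to deduce the identity directly from Lemma \ref{lem: L and R}, which was already established in the course of proving Theorem \ref{thm: main thm}. That lemma asserts $L\bhat R=Z_{1}$, so the content of Lemma \ref{lem: inductive factorization for inverse} is essentially inversion: once we know $L$ and $R$ are invertible, we rearrange $L\bhat R=Z_1$ as $\bhat^{-1}=R\,Z_1^{-1}\,L$, which is exactly the claimed formula $\ahat=R Z_1^{-1} L$.

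First I would verify invertibility of $L$ and $R$. By construction $L$ differs from the identity only in the $(m+1)$-st column (off-diagonal entries $\widehat{b}_{y,m+1}$ for $y\neq m+1$), and similarly $R$ differs from the identity only in the $(m+1)$-st row. Both are therefore unit-triangular (after moving the $(m+1)$-st index to the end), with determinant $1$. Hence $L^{-1}$ and $R^{-1}$ exist and are integer matrices. Next I would observe that $Z_1$ is invertible: by the inductive hypothesis of Theorem \ref{thm: main thm}, $\widehat{B^{u,\bm{\beta'}}}$ has determinant $\pm 1$, and $Z_1$ is built from it by adjoining a new row and column equal to $-e_{m+1}$ on the diagonal slot and $0$ elsewhere, so $\det Z_1=-\det\widehat{B^{u,\bm{\beta'}}}=\pm 1$. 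Moreover $Z_1^{-1}$ is obtained from $A^{u,\bm{\beta'}}$ in precisely the parallel way, as noted in the paragraph preceding the statement.

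Finally I would apply inversion to $L\bhat R=Z_1$. Taking inverses of both sides and using $(XYZ)^{-1}=Z^{-1}Y^{-1}X^{-1}$ gives $R^{-1}\,\bhat^{-1}\,L^{-1}=Z_1^{-1}$; multiplying on the left by $R$ and on the right by $L$ yields $\bhat^{-1}=R\,Z_1^{-1}\,L$, i.e.\ $\ahat=R Z_1^{-1} L$, as desired.

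There is essentially no technical obstacle here, provided Lemma \ref{lem: L and R} is in hand; the only thing worth being careful about is the consistent bookkeeping of the indexing conventions from Remark \ref{rem: reordering}, so that the ``new'' frozen vertex labeled $m+1$ is matched correctly between $\bhat$, $\widehat{B^{u,\bm{\beta'}}}$, $Z_1$, and the elementary factors $L,R$. Once that matching is spelled out, the factorization $\ahat=R Z_1^{-1} L$ gives the promised inductive description of the inverse: $A^{u,\bm\beta}$ is obtained from $A^{u,\bm{\beta'}}$ by adjoining a $(-1)$-diagonal entry at position $m+1$ and then conjugating by the elementary matrices $R$ and $L$, whose nonzero off-diagonal entries are read directly from the last row and column of $\bhat$.
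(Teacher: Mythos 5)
Your proposal is correct and follows exactly the paper's own argument: invert the identity $L\bhat R=Z_1$ from Lemma \ref{lem: L and R} to obtain $\ahat=RZ_1^{-1}L$. The extra care you take in checking that $L$, $R$, and $Z_1$ are invertible is sound but routine, and the approach is the same as in the paper.
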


\begin{proof}
Note that  $(\widehat{Z}^{u,\beta})^{-1}$ is obtained from ${A}^{u,\bm{\beta'}}$ by adding $(m+1)$-st row and column with $(-1)$ on diagonal and $0$ elsewhere.
By Lemma \ref{lem: L and R} we have $L\bhat R=\widehat{Z}^{u,\beta}$, so $R^{-1}\ahat L^{-1}=(\widehat{Z}^{u,\beta})^{-1}$ and $\ahat=R(\widehat{Z}^{u,\beta})^{-1}L$.
\end{proof}

%When $X_{u,\beta}$ is a Grassmannian, then $\bhat$ and its inverse can be explicitly described as follows.

%\begin{prop}\label{ex: grass gr(2,n) ex}    
%Let 

%\end{prop}

%\begin{proof}
 %   We proceed by induction.
%\end{proof}

%\begin{exam}
 %   \textcolor{red}{Try with top dimensional positroid, or skew shaped positroid example? What about positroid varieties?}
%\end{exam}

\subsection{Examples}\label{sec: ex of last sec}
%\textcolor{red}{Below here are all edited.}

We explicitly describe the action of $\text{Aut}(A(Q_{u,\bm\beta}))$ on $X_{u,\bm\beta}$ with several examples, including the running example from Example \ref{ex: running ex computation}. Observing from Example \ref{ex: inverse from 3.8} and Example \ref{ex: double arrow}, one might wonder whether \textit{all} entries in $\ahat$ matrix have the same sign. It seems that this \newword{sign phenomenon} is true for all $X_{id,\beta}$, but not in general, see Example \ref{ex: counter to sign}.

Suppose that we are given with a $(m+f)$-tuple of initial cluster variables $\textbf{x}=(x_i)_{1\le i\le m+f}$ where the labels comes from Remark \ref{rmk: ordering rmk} and $f$ counts the number of frozen variables in $A(Q_{u,\beta})$. 
%Note that first $m$ cluster variables are labeled at vertices $1,2,\ldots,m$ and the last $f$ cluster variables are frozen variables labeled at vertices $m+1,\ldots, m+f$. 
Throughout this section, black (resp. blue) colored arrows are normal (resp. half) arrows, and double arrow is indicated with $2$. We continue using the coloring convention on vertices as in Figure \ref{fig:running ex quiver}.
%\textcolor{red}{try not using color for double arrows?}

%{\color{red} Add inverse matrices for examples 3.8 and 3.9, and write down the action of the torus explicitly in all examples.}

\begin{exam}\label{ex: inverse from 3.8}
Continuing with our running Example \ref{ex: running ex computation}, we can compute $\ahat$ as below. From this, Corollary \ref{cor: torus} implies that there is a $(\CC^\times)^6$ action on $\X$ such that
\begin{align*}
x_1& \to t_3^{-1}x_1,  &    x_2&\to t_3^{-1}t_4^{-1}t_{5}^{-1}x_2,  & x_3&\to t_3^{-1}t_4^{-1}t_{5}^{-1}x_3,     &  x_{4}&\to t_5^{-1}x_{4},\\
x_5&\to t_1^{-1}t_3^{-1}t_4^{-1}t_5^{-1}t_{6}^{-1}x_5, &        x_6&\to t_1^{-1}t_2^{-1}t_3^{-1}x_6,   &  x_7&\to t_1^{-1}t_2^{-1}t_3^{-2}t_4^{-1}t_5^{-1}x_7,\\
x_8&\to t_1^{-1}t_2^{-1}t_3^{-2}t_4^{-2}t_5^{-1}x_8,   &  x_9&\to t_1^{-1}t_2^{-1}t_3^{-1}t_4^{-1}t_5^{-1}x_9,          &  x_{10}&\to t_3^{-1}t_4^{-1}t_5^{-1}t_{6}^{-1}x_{10}.
\end{align*}
\setcounter{MaxMatrixCols}{15}

\[\ahat=
\left[
\begin{array}{cccc|cccccc}
-1 & 0 & 0 & 0 & 0 & 0 & -1 & 0 & 0 & 0\\
-1 & -1 & -1 & -1 & 0 & 0 & -1 & -1 & -1 & 0\\
-1 & -1 & -1 & 0 & 0 & 0 & -1 & -1 & -1 & 0\\
0 & 0 & -1 & -1 & 0 & 0 & 0 & 0 & -1 & 0\\
0 & -1 & -1 & -1 & -1 & 0 & -1 & -1 & -1 & -1\\
-1 & 0 & 0 & 0 & -1 & -1 & -1 & 0 & 0 & 0\\
-1 & -1 & -1 & -1 & -1 & -1 & -2 & -1 & -1 & 0\\
-1 & -1 & -1 & 0 & -1 & -1 & -2 & -2 & -1 & 0\\
0 & 0 & 0 & 0 & -1 & -1 & -1 & -1 & -1 & 0\\
0 & -1 & -1 & -1 & 0 & 0 & -1 & -1 & -1 & -1\\
\end{array}
\right]
\]

\end{exam}

\begin{exam}\label{ex: double arrow}
Consider $u=s_3s_1s_2s_5s_4$ and $\bm\beta=(1,1,5,5,3,3,2,2,4,3,2,1,2,5,4,3)$. The right figure below is a picture of $Q_{u,\bm\beta}$ which has a double arrow from a vertex $6$ to a vertex $10$. Note that $Q_{u,\bm\beta}$ is not a plabic quiver considered in \cite{GL24}. 
\begin{figure*}[ht!]
    \centering
   %  \subfloat[]{
\begin{tikzpicture}[scale=1]
%six strands
\draw [thick](-0.5,0) -- (8,0);
\draw [thick](-0.5,1) -- (8,1);
\draw [thick](-0.5,2) -- (8,2);
\draw [thick](-0.5,3) -- (8,3);
\draw [thick](-0.5,4) -- (8,4);
\draw [thick](-0.5,5) -- (8,5);

%1st bridge
\draw [ultra thick] (0,0.95) to (0,0.05);
\node[scale =0.5, circle, draw=black, fill=white] (W) at (0,0.95) {};
\node[scale =0.5, circle, draw=black, fill=black] (B) at (0,0.05) {};\node[color=black] (W) at (0,1.2) {7};

%its propagation
\node[color=frenchrose] (1) at (-0.25, 0.5) {7};

%2st bridge
\draw [ultra thick] (0.5,0.95) to (0.5,0.05);
\node[scale =0.5, circle, draw=black, fill=white] (W) at (0.5,0.95) {};
\node[scale =0.5, circle, draw=black, fill=black] (B) at (0.5,0.05) {};
\node[color=frenchrose] (1) at (0.25,0.5) {1};
\node[color=black] (W) at (0.5,1.2) {1};
%\node[color=frenchrose] (1) at (-6,2.5) {2};

%3rd bridge
\draw [ultra thick] (1,4.95) to (1,4.05);
\node[scale =0.5, circle, draw=black, fill=white] (W) at (1,4.95) {};
\node[scale =0.5, circle, draw=black, fill=black] (B) at (1,4.05) {};
\node[color=frenchrose] (1) at (-0.25,4.5) {8};
%\node[color=frenchrose] (1) at (0.25,4.5) {8};
%\node[color=frenchrose] (1) at (0.75,4.5) {8};
\node[color=black] (W) at (1,5.2) {8};
%4th bridge
\draw [ultra thick] (1.5,4.95) to (1.5,4.05);
\node[scale =0.5, circle, draw=black, fill=white] (W) at (1.5,4.95) {};
\node[scale =0.5, circle, draw=black, fill=black] (B) at (1.5,4.05) {};
\node[color=frenchrose] (1) at (1.25,4.5) {2};
\node[color=black] (W) at (1.5,5.2) {2};
%\node[color=frenchrose] (1) at (-5,1.5) {4};

%5th bridge 

\draw [ultra thick] (2,2.95) to (2,2.05);
\node[scale =0.5, circle, draw=black, fill=white] (W) at (2,2.95) {};
\node[scale =0.5, circle, draw=black, fill=black] (B) at (2,2.05) {};
%\node[color=frenchrose] (1) at (-0.25,2.5) {9};
\node[color=frenchrose] (1) at (0.5,2.5) {9};
\node[color=black] (W) at (2,3.2) {9};

%6th bridge
\draw [ultra thick] (2.5,2.95) to (2.5,2.05);
\node[scale =0.5, circle, draw=black, fill=white] (W) at (2.5,2.95) {};
\node[scale =0.5, circle, draw=black, fill=black] (B) at (2.5,2.05) {};
\node[color=frenchrose] (1) at (2.25,2.5) {3};
\node[color=black] (W) at (2.5,3.2) {3};

%7th bridge
\draw [ultra thick] (3,1.95) to (3,1.05);
\node[scale =0.5, circle, draw=black, fill=white] (W) at (3,1.95) {};
\node[scale =0.5, circle, draw=black, fill=black] (B) at (3,1.05) {};

\node[color=frenchrose] (1) at (1,1.5) {10};
\node[color=black] (W) at (3,2.2) {10};

%8th bridge b
\draw [ultra thick] (3.5,1.95) to (3.5,1.05);
\node[scale =0.5, circle, draw=black, fill=white] (W) at (3.5,1.95) {};
\node[scale =0.5, circle, draw=black, fill=black] (B) at (3.5,1.05) {};
\node[color=frenchrose] (1) at (3.25,1.5) {4};
\node[color=black] (W) at (3.7,2.2) {4};

%9th bridge
\draw [ultra thick] (4,3.95) to (4,3.05);
\node[scale =0.5, circle, draw=black, fill=white] (W) at (4,3.95) {};
\node[scale =0.5, circle, draw=black, fill=black] (B) at (4,3.05) {};
\node[color=frenchrose] (1) at (1.5,3.5) {11};
\node[color=black] (W) at (3.7,3.8) {11};

%10th bridge b
 \draw[color=white, line width=5] (4.2,2) to (4.8,2);
 \draw[color=white, line width=5] (4.2,3) to (4.8,3);
\draw[color=blue, thick] (4.2,3) to [out=0,in=180] (4.8,2);
\draw[color=white, line width=5] (4.2,2) to [out=0,in=180] (4.8,3);
\draw[color=blue, thick] (4.2,2) to [out=0,in=180] (4.8,3);

%11th bridge
\draw [ultra thick] (5,1.95) to (5,1.05);
\node[scale =0.5, circle, draw=black, fill=white] (W) at (5,1.95) {};
\node[scale =0.5, circle, draw=black, fill=black] (B) at (5,1.05) {};
\node[color=frenchrose] (1) at (4.25,1.5) {5,6};
\node[color=frenchrose] (1) at (3.5,2.5) {5,6};
\node[color=black] (W) at (5,2.2) {5};

%12th crossing
%\draw[color=blue, thick] (1,1) --(1.1,1);
 %  \draw[color=blue, thick] (-0.1,0) --(0,0);
 % \draw[color=blue, thick] (1,0) --(1.1,0);
 \draw[color=white, line width=5] (5.2,0) to (5.8,0);
 \draw[color=white, line width=5] (5.2,1) to (5.8,1);
\draw[color=blue, thick] (5.2,1) to [out=0,in=180] (5.8,0);
\draw[color=white, line width=5] (5.2,0) to [out=0,in=180] (5.8,1);
\draw[color=blue, thick] (5.2,0) to [out=0,in=180] (5.8,1);

%13th crossing
%\draw[color=blue, thick] (1,1) --(1.1,1);
 %  \draw[color=blue, thick] (-0.1,0) --(0,0);
 % \draw[color=blue, thick] (1,0) --(1.1,0);
 \draw[color=white, line width=5] (5.8,2) to (6.4,2);
 \draw[color=white, line width=5] (5.8,1) to (6.4,1);
\draw[color=blue, thick] (5.8,2) to [out=0,in=180] (6.4,1);
\draw[color=white, line width=5] (5.8,1) to [out=0,in=180] (6.4,2);
\draw[color=blue, thick] (5.8,1) to [out=0,in=180] (6.4,2);

%14th bridge crossing
%\draw[color=blue, thick] (1,1) --(1.1,1);
 %  \draw[color=blue, thick] (-0.1,0) --(0,0);
 % \draw[color=blue, thick] (1,0) --(1.1,0);
 \draw[color=white, line width=5] (6.2,5) to (6.8,5);
 \draw[color=white, line width=5] (6.2,4) to (6.8,4);
\draw[color=blue, thick] (6.2,5) to [out=0,in=180] (6.8,4);
\draw[color=white, line width=5] (6.2,4) to [out=0,in=180] (6.8,5);
\draw[color=blue, thick] (6.2,4) to [out=0,in=180] (6.8,5);

%15th bridge crossing
%\draw[color=blue, thick] (1,1) --(1.1,1);
 %  \draw[color=blue, thick] (-0.1,0) --(0,0);
 % \draw[color=blue, thick] (1,0) --(1.1,0);
 \draw[color=white, line width=5] (6.8,4) to (7.4,4);
 \draw[color=white, line width=5] (6.8,3) to (7.4,3);
\draw[color=blue, thick] (6.8,4) to [out=0,in=180] (7.4,3);
\draw[color=white, line width=5] (6.8,3) to [out=0,in=180] (7.4,4);
\draw[color=blue, thick] (6.8,3) to [out=0,in=180] (7.4,4);

%16th bridge
\draw [ultra thick] (7.5,2.95) to (7.5,2.05);
\node[scale =0.5, circle, draw=black, fill=white] (W) at (7.5,2.95) {};
\node[scale =0.5, circle, draw=black, fill=black] (B) at (7.5,2.05) {};
\node[color=black] (W) at (7.5,3.2) {6};
%\node[color=frenchrose] (1) at (7,2.5) {6};
%\node[color=frenchrose] (1) at (6.5,2.5) {6};
\node[color=frenchrose] (1) at (5.5,2.5) {6};
%\node[color=frenchrose] (1) at (5,2.5) {6};
%\node[color=frenchrose] (1) at (5.5,1.5) {6};
%\node[color=frenchrose] (1) at (5,0.5) {6};
%\node[color=frenchrose] (1) at (4,0.5) {6};
\node[color=frenchrose] (1) at (3,0.5) {6};
%\node[color=frenchrose] (1) at (2,0.5) {6};
%\node[color=frenchrose] (1) at (1,0.5) {6};
%\node[color=frenchrose] (1) at (6.5,3.5) {6};
\node[color=frenchrose] (1) at (5.5,3.5) {6};
%\node[color=frenchrose] (1) at (4.5,3.5) {6};
%\node[color=frenchrose] (1) at (6,4.5) {6};
%\node[color=frenchrose] (1) at (5,4.5) {6};
\node[color=frenchrose] (1) at (4,4.5) {6};
%\node[color=frenchrose] (1) at (3,4.5) {6};
%\node[color=frenchrose] (1) at (2,4.5) {6};
%\node[color=frenchrose] (1) at (-4,2.5) {6};
%\node[color=frenchrose] (1) at (-5,2.5) {6};

\end{tikzpicture}
%}
    \qquad
%\subfloat[]{
\begin{tikzpicture}[squarednode/.style={rectangle, draw=blue!60, fill=blue!5, very thick, minimum size=5mm},
]

%\node[draw, circle,green] (16) at (5,3){6};
\node[draw, circle,red] (16) at (4,3){6};
\node[draw, circle, red] (11) at (1,3){5};
\node[draw, circle,red] (8) at (0,1){4};
\node[draw, squarednode] (7) at (1,1){10};
\node[draw, squarednode] (9) at (2,1){11};
\node[draw, circle, red] (6) at (3,1){3};
\node[draw, circle,red] (4) at (4,1){2};
\node[draw, circle,red] (2) at (5,1){1};
\node[draw, squarednode] (5) at (3,0){9};
\node[draw, squarednode] (3) at (4,0){8};
%\node[draw, circle, red] (1) at (5,0){7};
\node[draw, squarednode] (1) at (4,-1.5){7};

%double arrow here
%\draw[-Stealth,red, thick] (16) to (7);
 \draw (16) edge[->] node[font=\tiny\ttfamily,above] {2} (7);

\draw[->] (8) to (16);
\draw[->] (6) to (16);
\draw[->] (4) to (16);
\draw[->] (2) to (16);
\draw[->] (16) to (9);
\draw[->] (8) to (11);
\draw[->] (6) to (11);
\draw[->] (11) to (7);
\draw[->] (11) to (9);
\draw[->] (7) to (8);  
\draw[->] (5) to (6);
\draw[->] (3) to (4);
\draw[->] (1) to (2);
%frozen arrows
\draw[->,blue!75] (9) to (3);  
\draw[->,blue!75] (9) to (5);
\draw[->,blue!75] (7) to (5);
\draw[->,blue!75] (7) to (1);  
\end{tikzpicture}
%}
    \label{fig:ex7}
\end{figure*}
Observe that all entries in $\ahat$ are either all negative integers or $0$. Corollary \ref{cor: torus} implies that there is a $(\CC^\times)^5$ on $\X$ such that

\begin{align*}
x_1 & \to t_1^{-1}t_2^{-1}t_3^{-1}t_3^{-1}t_5^{-1}x_1,  &  x_2 &\to t_3^{-1}t_4^{-1}t_5^{-1}x_2,  & x_3&\to t_1^{-1}t_2^{-1}t_3^{-2}t_4^{-1}t_5^{-2}x_3, \\
  x_{4}&\to t_1^{-1}t_2^{-1}t_3^{-1}t_4^{-2}t_5^{-2}x_4,
& x_{5}&\to t_1^{-1}t_3^{-1}t_4^{-1}t_5^{-1}x_{5},
& x_{6}&\to t_2^{-1}t_3^{-1}t_4^{-1}t_5^{-1}x_{6}\\
  x_7 &\to t_1^{-1}t_2^{-1}t_3^{-1}t_4^{-1}t_5^{-1}x_7,
&  x_8 &\to t_1^{-1}t_2^{-1}t_3^{-1}t_4^{-1}t_5^{-2}x_8,
&  x_9 &\to t_1^{-1}t_2^{-1}t_3^{-2}t_4^{-2}t_5^{-2}x_9,\\
  x_{10} &\to t_1^{-1}t_2^{-1}t_3^{-2}t_4^{-2}t_5^{-2}x_{10},
&  x_{11} &\to t_1^{-1}t_2^{-1}t_3^{-1}t_4^{-1}t_5^{-1}x_{11}.
\end{align*}%\textcolor{red}{Move 7 down to make quiver look nicer + write about red arrows.}
\setcounter{MaxMatrixCols}{15}

Here, the computation of a matrix ${\ahat}$ is given as below.

%${\ahat}$ is computed as

\[
\left[
\begin{array}{cccccc|ccccc}
0 & 0 & -1 & -1 & 0 & -1     & -1 & -1 & -1 & -1 & -1\\
-1  & -1 & -1 & 0 & -1 & 0   & 0  & 0  & -1 & -1 & -1 \\
-0 & -1 & -1 & -2 & -1 &  -1 & -1 & -1 & -2 & -1 & -2\\
-1 & -1 & -3 & -1 & -1 & -1  & -1 & -1 & -1 & -2 & -2\\
0 &  -1 & -1 & -1 & 0 &  -1  & -1 & 0  & -1 & -1 & -1\\
-1 &  0 & -1 & 0 & -1 & -1   & 0  & -1 & -1 & -1 &  -1\\
0 &  -1 & -1 & -1 & 0 &  -1  & -1 & -1 & -1 & -1 & -1\\
0 & -1 & -2 & -1 & 0  & -1   & -1 & -1 & -1 & -1 & -2\\
-1 & -1 & -3 & -1 & -1 & -1  & -1 & -1 & -2 & -2 & -2\\
-1 & -1 & -2 & -2 & -1 & -1  & -1 & -1 & -2 & -2 & -2\\
-1 & 0 & -1 & -1 & 0 & -1    & -1 & -1 & -1 & -1 & -1
\end{array}
\right].
\]

\end{exam}

Notice that the sign phenomenon fails in the following example. 

\begin{exam}\label{ex: counter to sign}
    Let us consider $u=s_1s_2$ and $\bm\beta=(1,3,1,2,1,3,2,2,3)$.
     %, and choose the relabeling permutation as $(9,7,6,3,-, 2,4,-,1)$.
     Then the figure of $G_{u,\bm\beta}$ and $Q_{u,\bm\beta}$ (allowing half arrows between frozen vertices) are drawn as below. 
    \begin{figure*}[h]
    \centering
     \subfloat[]{
\begin{tikzpicture}[scale=1]
%six strands
\draw [thick](-6.5,0) -- (0,0);
\draw [thick](-6.5,1) -- (0,1);
\draw [thick](-6.5,2) -- (0,2);
\draw [thick](-6.5,3) -- (0,3);
%\draw [thick](-6.5,4) -- (0,4);
%\draw [thick](-6.5,5) -- (0,5);

%1st bridge
\draw [ultra thick] (-6,0.95) to (-6,0.05);
\node[scale =0.5, circle, draw=black, fill=white] (W) at (-6,0.95) {};
\node[scale =0.5, circle, draw=black, fill=black] (B) at (-6,0.05) {};
\node[color=black] (W) at (-6,1.2) {5};
%its propagation
\node[color=frenchrose] (1) at (-6.25,0.5) {5};

%2st bridge
\draw [ultra thick] (-5.5,2.95) to (-5.5,2.05);
\node[scale =0.5, circle, draw=black, fill=white] (W) at (-5.5,2.95) {};
\node[scale =0.5, circle, draw=black, fill=black] (B) at (-5.5,2.05) {};
\node[color=frenchrose] (1) at (-6.25,2.5) {6};
\node[color=black] (W) at (-5.5,3.2) {6};
%\node[color=frenchrose] (1) at (-5.75,2.5) {6};

%3rd bridge
\draw [ultra thick] (-5,0.95) to (-5,0.05);
\node[scale =0.5, circle, draw=black, fill=white] (W) at (-5,0.95) {};
\node[scale =0.5, circle, draw=black, fill=black] (B) at (-5,0.05) {};
\node[color=frenchrose] (1) at (-5.5,0.5) {1};
\node[color=black] (W) at (-5,1.2) {1};

%4th bridge
\draw [ultra thick] (-4.5,1.95) to (-4.5,1.05);
\node[scale =0.5, circle, draw=black, fill=white] (W) at (-4.5,1.95) {};
\node[scale =0.5, circle, draw=black, fill=black] (B) at (-4.5,1.05) {};
\node[color=frenchrose] (1) at (-6.25,1.5) {7};
\node[color=black] (W) at (-4.7,1.8) {7};

%5th bridge b (=crossing)

 \draw[color=white, line width=5] (-4.3,0) to (-3.7,0);
 \draw[color=white, line width=5] (-4.3,1) to (-3.7,1);
\draw[color=blue, thick] (-4.3,1) to [out=0,in=180] (-3.7,0);
\draw[color=white, line width=5] (-4.3,0) to [out=0,in=180] (-3.7,1);
\draw[color=blue, thick] (-4.3,0) to [out=0,in=180] (-3.7,1);

%6th bridge
\draw [ultra thick] (-3.5,2.95) to (-3.5,2.05);
\node[scale =0.5, circle, draw=black, fill=white] (W) at (-3.5,2.95) {};
\node[scale =0.5, circle, draw=black, fill=black] (B) at (-3.5,2.05) {};
%\node[color=frenchrose] (1) at (-4,2.5) {2};
%\node[color=frenchrose] (1) at (-5,2.5) {2};
\node[color=frenchrose] (1) at (-4.5,2.5) {2};
\node[color=black] (W) at (-3.5,3.2) {2};
%7th bridge
\draw [ultra thick] (-3,1.95) to (-3,1.05);
\node[scale =0.5, circle, draw=black, fill=white] (W) at (-3,1.95) {};
\node[scale =0.5, circle, draw=black, fill=black] (B) at (-3,1.05) {};
\node[color=frenchrose] (1) at (-3.75,1.5) {3,4};
\node[color=frenchrose] (1) at (-4.5,0.5) {3,4};
\node[color=black] (W) at (-2.8,2.2) {3};
%\node[color=frenchrose] (1) at (-4.5,1.5) {7};

%8th bridge b

 \draw[color=white, line width=5] (-2.8,2) to (-2.2,2);
 \draw[color=white, line width=5] (-2.8,1) to (-2.2,1);
\draw[color=blue, thick] (-2.8,2) to [out=0,in=180] (-2.2,1);
\draw[color=white, line width=5] (-2.8,1) to [out=0,in=180] (-2.2,2);
\draw[color=blue, thick] (-2.8,1) to [out=0,in=180] (-2.2,2);

%9th bridge
\draw [ultra thick] (-2,2.95) to (-2,2.05);
\node[scale =0.5, circle, draw=black, fill=white] (W) at (-2,2.95) {};
\node[scale =0.5, circle, draw=black, fill=black] (B) at (-2,2.05) {};
%\node[color=frenchrose] (1) at (-2.5,2.5) {4};
\node[color=frenchrose] (1) at (-3,2.5) {4};
\node[color=frenchrose] (1) at (-2.8,1.5) {4};
\node[color=black] (W) at (-2,3.2) {4};

\end{tikzpicture}}
    \qquad
\subfloat[]{
\begin{tikzpicture}[squarednode/.style={rectangle, draw=blue!60, fill=blue!5, very thick, minimum size=5mm},
]
\node[draw, squarednode] (1) at (-6,0.5){5};
\node[draw, squarednode] (1) at (-6,0.5){5};
\node[draw, squarednode] (2) at (-5.5,2.5){6};
\node[draw, circle,red] (3) at (-5,0.5){1};
\node[draw, squarednode] (4) at (-4.5,1.5){7};
\node[draw, circle,red] (6) at (-3.5,2.5){2};
\node[draw, circle, red] (7) at (-3,1.5){3};
\node[draw, circle, red] (9) at (-2,2.5){4};
%\node[draw, circle, green] (9) at (4,2){9};
%\node[draw, circle, red] (10) at (4.5,5){10};
%\node[draw, circle, green] (13) at (6,5){13};
\draw[->] (2) to (6);
\draw[->] (6) to (4);
\draw[->] (1) to (3);
\draw[->] (3) to (9);
\draw[->] (7) to (6);
\draw[->] (3) to (7);
\draw[->,blue!75] (4) to (1);
\draw[->,blue!75] (4) to (2);
%\draw[->] (7) to (13);
%\draw[->] (13) to (9);  
\end{tikzpicture}
}
  %\caption{(A) Figure of a 3D plabic graph $G_{s_1s_2,\sigma_1\sigma_3\sigma_1\sigma_2\sigma_1\sigma_3\sigma_2\sigma_2\sigma_3}$ and (B) $Q_{s_1s_2,\sigma_1\sigma_3\sigma_1\sigma_2\sigma_1\sigma_3\sigma_2\sigma_2\sigma_3}$.}
    %\label{fig:ex6}
\end{figure*}

From this, we can calculate the matrix 

\[
\left[
\begin{array}{cccc|ccc}
 0 & 0   & 0   &  -1  &  0  &  0  &  0\\
 0  & 0  & 1  &  -1  &  0  & 0  & 0\\
0  &  -1  &  -1  &  0  &  -1  &  0  &  -1\\
1  &  1  &  1  &  -1  &  0  &  0  &  1\\
0  &  0  &  0  &  -1  & -1  &  0  &  0\\
0  &  0  &  1  &  -1  & 0  & -1  & 0\\
0  &  0  &  0  &  -1  &  -1  &  -1  & -1
\end{array}
\right],
\]

whose entries have mixed signs. Thus the $(\CC^\times)^3$ action on $\X$ is described as

\begin{align*}
x_1 & \to x_1,  &  x_2 &\to x_2,  & x_3&\to t_1^{-1}t_3^{-1}x_3, 
 & x_{4}&\to t_3x_4,\\
 x_{5}&\to t_1^{-1}x_{5},
& x_{6}&\to t_2^{-1}x_6,
&  x_7 &\to t_1^{-1}t_2^{-1}t_3^{-1}x_7.
\end{align*}
\end{exam}

%\bibliographystyle{plain}
%\bibliography{References}

\end{document}